\documentclass[a4paper]{article}

\usepackage[latin1]{inputenc} 
\usepackage[T1]{fontenc} 
\usepackage[left=2.2cm,right=2.2cm,top=2cm,bottom=2.2cm]{geometry}
\usepackage{amsmath,amsthm,amssymb} \let\amsmathbb\mathbb 
\usepackage[dvipsnames,table]{xcolor} 
\usepackage{graphicx}
\usepackage{mathtools} 
\usepackage{stmaryrd} \SetSymbolFont{stmry}{bold}{U}{stmry}{m}{n} 
\usepackage{esint} 
\usepackage[english]{babel} 
\usepackage{float} 
\usepackage{enumitem}
\usepackage{multirow,multicol,paracol,booktabs} 
\usepackage{lipsum} 
\usepackage[normalem]{ulem} 
\usepackage{tabularx}\newcolumntype{C}{>{\centering\arraybackslash}X} 
\usepackage{xspace}
\usepackage{clipboard} 
\usepackage[overload]{empheq} 
\usepackage{mathrsfs} 
\usepackage{fourier} 
\usepackage[cal=cm,scr=boondox]{mathalpha} 
\usepackage[backend=bibtex,giveninits=true,style=trad-alpha,url=false,doi=false,isbn=false,sorting=nty,style=alphabetic,maxnames=50]{biblatex}
\usepackage[hidelinks,colorlinks,allcolors=.,urlcolor=teal]{hyperref} 
\usepackage[noabbrev]{cleveref} 

\crefdefaultlabelformat{#2#1#3}
\crefname{equation}{}{} \creflabelformat{equation}{(#2#1#3)}
\crefname{section}{section}{sections} \Crefname{section}{Section}{Sections}
\crefname{subsection}{}{} \Crefname{subsection}{Section}{Sections}
\crefname{enumi}{}{} \Crefname{enumi}{Point}{Points}
\creflabelformat{appx}{#2Appendix#3}\Crefname{appx}{}{}

\allowdisplaybreaks 

\AtEveryBibitem{\clearfield{location}}
\AtEveryBibitem{\clearfield{eprint}}
\AtEveryBibitem{\clearname{editor}}
\AtEndDocument{
	\printbibliography
}

\setlist[itemize,1]{label=$-$}
\setlist[itemize,2]{label=$-$}
\setlist[itemize,3]{label=$-$}

\makeatletter
\NewCommandCopy\latexparagraph\paragraph
\RenewDocumentCommand{\paragraph}{sO{#3}m}{%
  \IfBooleanTF{#1}
    {\latexparagraph*{\maybe@addperiod{#3}}}
    {\latexparagraph[#2]{\maybe@addperiod{#3}}}%
}
\newcommand{\maybe@addperiod}[1]{%
  #1\@addpunct{.}%
}
\makeatother

\makeatletter
\newcommand{\ncite}{\@ifstar{\@ncite}{\@@ncite}}
\newcommand{\@ncite}[2][]{\citeauthor*{#2} \ifx#1\empty\cite{#2}\else\cite[#1]{#2}\fi}
\newcommand{\@@ncite}[2][]{\citeauthor{#2} \ifx#1\empty\cite{#2}\else\cite[#1]{#2}\fi}
\makeatother

\AtBeginDocument{%
	\let\mathbb\relax%
	\newcommand{\mathbb}[1]{\amsmathbb{#1}}%
}

\theoremstyle{definition}

\newtheorem{definition}{Definition} \crefname{definition}{definition}{definitions} \Crefname{definition}{Definition}{Definitions}
\newtheorem*{definition*}{Definition}

 \crefname{conjecture}{conjecture}{conjectures} \Crefname{conjecture}{Conjecture}{Conjectures}
\newtheorem*{conjecture*}{Conjecture}

 \crefname{hyp}{}{} \Crefname{hyp}{}{} \creflabelformat{hyp}{\textbf{[#2A#1#3]}}

\theoremstyle{theorem}

\newtheorem{theorem}{Theorem} \crefname{theorem}{theorem}{Theorem} \Crefname{theorem}{Theorem}{Theorems}
\newtheorem*{theorem*}{Theorem}

\newtheorem{lem}[definition]{Lemma} \crefname{lem}{lemma}{lemmata} \Crefname{lem}{Lemma}{Lemmata}
\newtheorem*{lem*}{Lemma}

\newtheorem{prop}[definition]{Proposition} \crefname{prop}{proposition}{propositions} \Crefname{prop}{Proposition}{Propositions}
\newtheorem*{prop*}{Proposition}

 \crefname{cor}{corollary}{corollaries} \Crefname{cor}{Corollary}{Corollaries}
\newtheorem*{cor*}{Corollary}

\theoremstyle{remark}

 \crefname{example}{example}{examples} \crefname{example}{Example}{Examples}

\newtheorem{rem}[definition]{Remark} \crefname{rem}{remark}{remarks} \Crefname{rem}{Remark}{Remarks} 
\newtheorem*{rem*}{Remark}

\newcommand{\highlight}[2]{\begin{center}\small\begin{minipage}{0.9\textwidth}\begin{center}\textbf{#1}\end{center}\vspace*{-7pt}#2\end{minipage}\end{center}}

\let\div\undefined\DeclareMathOperator*{\div}{div\,} 

\DeclareMathOperator*{\supp}{supp\,}
\DeclareMathOperator*{\diam}{diam\,}

\newcommand{\ind}{{\textrm{1\hspace{-0.6ex}I}}}
\newcommand{\st}{\ \middle| \ } 
\newcommand{\bdot}{{\boldsymbol{\cdot}}} 

\newcommand{\resmes}{\mathbin{\vrule height 1.6ex depth 0pt width 0.18ex\vrule height 0.15ex depth 0pt width 0.7ex}} 

\makeatletter
\newcommand{\stephighlight}[1]{\medskip\noindent\textbf{#1\@addpunct{.}}\quad} 
\makeatother 

\newcommand{\Sp}{\mathbb{R}}
\DeclareMathOperator{\T}{T} 
\DeclareMathOperator{\Psp}{{\mathscr{P}}} 
\DeclareMathOperator{\Tan}{\mathbf{Tan}} 
\DeclareMathOperator{\rTan}{Tan} 
\DeclareMathOperator{\Lipsp}{Lip} 

\newcommand{\bary}[2]{{\text{Bary}_{#2}\left(#1\right)}} 
\newcommand{\Lip}[1]{\text{Lip}(#1)}

\title{Characterization of measures on the real line \\ that are critically unstable under small shifts}
\author{Averil Aussedat\footnote{Dipartimento di Matematica, Universit\`a di Pisa, largo Pontecorvo 5, 56127 Pisa, Italy. \texttt{averil.aussedat@dm.unipi.it}.}}
\date{}

\begin{document}

\maketitle

\highlight{Abstract}{We study the perturbation of a measure $\mu \in \mathscr{P}(\mathbb{R})$ consisting in superposing two copies of $\mu$, each slightly shifted by a small distance $\pm h$. The difference between $\mu$ and its perturbation is measured with a Wasserstein distance. For any $\mu$, this distance is bounded from above by $h$. We show that measures for which this critical rate is achieved when $h$ goes to 0 are characterized as the ones giving most of their mass to some particular porous sets. This is used to identify which measures $\mu$ on the real line have a 2-Wasserstein tangent cone equal to the set of directions inducing curves with maximal initial speed.}

\medskip

\noindent \textbf{Keywords: Wasserstein distances, porous sets, tangent cone, Cantor sets.}
\medskip

\noindent \textbf{MSC 2020: 49Q22, 28A80, 26A30.} 

\section*{Introduction}

\renewcommand{\Sp}{\mathbb{R}}

The set $\Psp(\Sp)$ of probability measures can be endowed with the family of Wasserstein distances $W_p(\cdot,\cdot)$, measuring the minimal $L^p-$travel distance for an exponent $p \in [1,\infty)$. All these distances coincide in some particular cases: for instance, if $\mu = \delta_0$ and $\mu_h \coloneqq \frac{1}{2} \left[\delta_{-h} + \delta_h\right]$, then $W_p(\mu,\mu_h) = h$ for any $h \geqslant 0$ and $p \in [1,\infty)$. This article focuses on the set of measures satisfying the same inequality at the limit when $h$ goes to zero. Precisely, given $\mu \in \Psp(\Sp)$, we consider the superposition of its left and right shift
\begin{align*}
	\mu_h \coloneqq \frac{1}{2} \left[(id - h)_{\#} \mu + (id + h)_{\#} \mu\right],
\end{align*}
and ask whether it holds that 
\begin{align}\label{intro:limsup1}
	\limsup_{h \searrow 0} \frac{W_p(\mu,\mu_h)}{h} = 1.
\end{align}
The inequality $\leqslant$ always holds, and for absolutely continuous measures, the limit sup in is equal to 0 instead of 1. Hence the condition~\Cref{intro:limsup1} is a critical behavior, and we expect any $\mu$ satisfying it to be very concentrated. It turns out that the set of such $\mu$ is independent of $p \in [1,\infty)$ (see \Cref{rem:limindepp} below). The first result of the paper states that \Cref{intro:limsup1} is equivalent to the fact that $\mu$ gives almost all its mass to an element of a very restrictive class $\mathscr{A}$ of porous set. 

\begin{definition}\label{intro:def:porous}
	A measurable set $A \subset \mathbb{R}$ belongs to $\mathscr{A}$ if for some $(s_n)_{n \in \mathbb{N}} \subset (0,1)$ going to 0 when $n \to \infty$, there holds
	\begin{align}\label{intro:def:porous:statement}
		\lim_{n \to \infty} \sup_{x \in A} \inf \left\{ \tau \in (0,1) \st \mathscr{B}(x,s_n) \cap A \subset \overline{\mathscr{B}}(x,\tau s_n) \right\} = 0.
	\end{align}
\end{definition}
In words, in any ball of radius $s_n$ around $x \in A$, the set $A$ is contained in the smaller ball of radius $\tau s_n$ for some $\tau$ that goes to 0 with $s_n$.
Sets in $\mathscr{A}$ are porous in the sense that each ball $\mathscr{B}(x,s_n)$ centred at $x \in A$ contains a ball of diameter larger than $(1-\tau) s_n$ that does not intersect $A$ (see for instance~\cite{meraPorositysigmaporosityMeasures2003}), but \Cref{intro:def:porous} is stronger in general: the same sequence of scales $(s_n)_n$ is shared by all points $x \in A$, and $\tau$ is as small as desired. 

\begin{theorem}\label{res:charLS1}
	Let $\mu \in \Psp(\Sp)$. The following conditions are equivalent:
	\begin{enumerate}[label={(\alph*)}]
	\item\label{item:charLS1:limit} the equality \Cref{intro:limsup1} holds for some (thus any) $p \in [1,\infty)$;
	\item\label{item:charLS1:concen} for any $\varepsilon > 0$, there exists $A \in \mathscr{A}$ such that $\mu(A) \geqslant 1 - \varepsilon$.
	\end{enumerate}
\end{theorem}

The motivation of \Cref{res:charLS1} is to solve a pending question on the geometry of the 2-Wasserstein space. For $p = 2$, one can use the existing theory of Alexandrov spaces \cite{alexanderAlexandrovGeometryFoundations2023} to construct a geometric tangent cone attached to each measure \cite{ambrosioGradientFlows2005,gigliGeometrySpaceProbability2008}. The elements of this cone can be represented by measure-valued applications in $L^2_{\mu}(\mathbb{R}^d;\Psp_2(\mathbb{R}^d))$. The subset of elements induced by a map, i.e. of the form $x \mapsto \delta_{f(x)}$ for some $f \in L^2_{\mu}(\mathbb{R}^d;\mathbb{R}^d)$, has proved to be useful in studying differentiability of functions of measures \cite{bressanShiftdifferentiabilityFlowGenerated1997,ottoGeometryDissipativeEvolution2001,ambrosioGradientFlows2005,gangboHamiltonJacobiEquationsWasserstein2008,carmonaProbabilisticTheoryMean2018,gangboDifferentiabilityWassersteinSpace2019}. In recent years however, the general case of elements that could take values outside of Dirac masses has gained attention, both to generalize continuity equations with ``measure fields'' and unify discrete and continuous treatments of numerical schemes (formulated with trajectories that ``split mass'' at the discrete level)  \cite{piccoliMeasureDifferentialEquations2019,camilliSuperpositionPrincipleSchemes2021,cavagnariLagrangianApproachTotally2023,cavagnariDissipativeProbabilityVector2023}, or as a theoretically appealing definition for optimization and differentiability over measures \cite{lanzettiFirstorderConditionsOptimization2022,bertucciStochasticOptimalTransport2024,schichlNonlinearDegenerateParabolic2025,bertucciTangentSpaceWasserstein2025,ceccherinisilbersteinViscositySolutionsHamiltonJacobi2026}. 

\medskip 
In this transition from map-induced to general elements of the tangent cone, some properties are lost. The one that motivates \Cref{res:charLS1} is the following: if $f \in L^2_{\mu}(\mathbb{R}^d;\mathbb{R}^d)$ is a vector field, then the fact that $x \mapsto \delta_{f(x)}$ belongs to the tangent cone is equivalent to the fact that $W_2(\mu,(id + h f)_{\#} \mu)/h = \|f\|_{L^2_{\mu}} + O(h)$ \cite[Proposition~5.3.8]{aussedatOptimalControlProblems2025}. This equivalence replaces the definition of the tangent cone, which goes by approximation by geodesics in a strong topology, by a simple computation of the initial speed of the curve $h \mapsto (id + h f)_{\#} \mu$. Unfortunately, tt does not hold in the general case. The second result of this paper is to characterize the set of measures $\mu$ for which 
\begin{align}\label{intro:equivTan}
	\xi \in \Tan_{\mu}
	\qquad \iff \qquad
	\lim_{h \searrow 0} \frac{W_2(\mu,(\pi_x + h \pi_v)_{\#} \xi)}{h} = \|\xi\|_{\mu}.
\end{align}

\begin{theorem}\label{res:answerQ1}
	Let $\mu \in \Psp_2(\Sp)$. The following conditions are equivalent:
	\begin{enumerate}
	\item the equivalence \Cref{intro:equivTan} holds; 
	\item the diffuse part of $\mu$ satisfies $\mu^{d}(A) = 0$ for any $A \in \mathscr{A}$.
	\end{enumerate}
\end{theorem}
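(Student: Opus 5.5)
Throughout I use the one-dimensional description of $\Tan_\mu$ (Gigli): a plan $\xi \in \Psp_2(\Sp\times\Sp)$ with first marginal $\mu$ lies in $\Tan_\mu$ iff its restriction to the atoms of $\mu$ is arbitrary, and on the diffuse part $\mu^d$ it is induced by a map, $\xi\resmes(\mu^d\text{-part}) = (id,f)_\#\mu^d$. I also use the general facts that $W_2(\mu,(\pi_x+h\pi_v)_\#\xi)\leqslant h\|\xi\|_\mu$ always, and that for $\xi\in\Tan_\mu$ the limit in \Cref{intro:equivTan} exists and equals $\|\xi\|_\mu$. So the implication ``$\Rightarrow$'' of \Cref{intro:equivTan} is automatic, and the content of the theorem is the converse: if $\xi\notin\Tan_\mu$, is the initial speed strictly below $\|\xi\|_\mu$?

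\textbf{(2) $\Rightarrow$ (1).} Take $\xi\notin\Tan_\mu$ and assume, for contradiction, that the speed equals $\|\xi\|_\mu$.

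On each atom $x_i$ the vertical part of the plan moves the atom's mass to $x_i+hv$. For small $h$ the atoms separate, and an optimal plan must split the atom without loss, so atoms contribute exactly their share of $\|\xi\|_\mu^2$.

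On the diffuse part, write the disintegration $\xi_x$. Since $\xi\notin\Tan_\mu$, $\xi_x$ is not a Dirac on a set of positive $\mu^d$-measure. In 1D the optimal map between $\mu$ and $(\pi_x+h\pi_v)_\#\xi$ is monotone, so its cost is computed through quantile functions. If equality held, the monotone rearrangement would have to reverse no mass at all. Quantitatively, I would compare with the superposition structure of \Cref{res:charLS1}. Localize to a set where $\xi_x$ has two well-separated pieces of mass, near velocities $a<b$. Discretize velocities and use the parallelogram/convexity identity: the cost is at most $h^2\|\xi\|^2 - h^2\cdot(\text{gain})$. The gain is bounded below by a multiple of $\left[1-W_2(\nu,\nu_h)^2/h^2\right]$-type quantities for a measure $\nu$ built from $\mu^d$ restricted to that set.

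By (2), $\nu$ charges no $A\in\mathscr{A}$. Then \Cref{res:charLS1} gives $\limsup_h W(\nu,\nu_h)/h<1$, hence a strictly positive gain and the contradiction. The delicate point is the $\lim$ versus $\limsup$ mismatch: \Cref{res:charLS1} yields a gain only along all small $h$ if $\limsup<1$. This is exactly what we have, since the negation of (a) is $\limsup<1$, which gives strict inequality for every small $h$. I also need a reduction from general velocity splittings to the symmetric $\pm h$ one, via affine rescaling and translation on pieces.

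\textbf{(1) $\Rightarrow$ (2).} Suppose $\mu^d(A)>0$ for some $A\in\mathscr{A}$. Set $\nu=\mu^d\resmes A$, so that by \Cref{res:charLS1}, (b) $\Rightarrow$ (a), $\limsup_h W_2(\nu,\nu_h)/h=1$. Define $\xi$ by $\xi_x=\tfrac12(\delta_{-1}+\delta_{1})$ on $A$ and $\xi_x=\delta_0$ elsewhere. Then $\xi\notin\Tan_\mu$, and $\|\xi\|_\mu^2=\mu(A)$.

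We need the full limit, not just a $\limsup$, so I must upgrade. The porosity at the scales $s_n$ makes $A$ at scale $h\approx s_n$ consist of clusters of diameter $\ll h$ separated by gaps $\gg h$. Since $\mu^d$ is diffuse, the masses of the other parts of $\mu$ in those gaps also become negligible. One then checks that the shifted copies $\pm h$ must be transported back over distance $\approx h$, giving speed $\to\|\xi\|_\mu$ along $h=\tau_n s_n$-type sequences.

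To get the genuine limit, I would instead modify $\xi$ to use velocities adapted to all scales, or exploit the freedom in \Cref{intro:equivTan} by interpreting the failure as ``$\limsup$ equals $\|\xi\|$''. This gap between $\limsup$ and $\lim$ is the main obstacle. My first attempt is to choose a velocity field $\xi_x=\tfrac12(\delta_{-c(x)}+\delta_{c(x)})$ with $c$ oscillating across dyadic scale blocks so that every small $h$ sees some porous scale. If that fails, I would read the right side of \Cref{intro:equivTan} as a $\limsup$ condition consistent with \Cref{res:charLS1}.
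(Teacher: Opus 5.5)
\textbf{(2)$\Rightarrow$(1).} This direction has a genuine gap. The bound ``cost $\leqslant h^2\|\xi\|_\mu^2-h^2\cdot$gain, with gain $\gtrsim 1-W^2(\nu,\nu_h)/h^2$'' is the whole content of this direction, and your sketch gives no mechanism for it. For a centred non-map field, both the barycentre and the spread of $\xi_x$ depend on $x$, so ``affine rescaling and translation on pieces'' is not an available operation.

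The paper argues by contraposition. It uses three reductions, each preserving the \emph{exact} equality $\limsup_h W(\mu,\exp_\mu(h\cdot\zeta))/h=\|\zeta\|_\mu$:
\begin{enumerate}
\item Removing an $x$-dependent barycentre (\Cref{res:removebary}). This uses a step-function approximation with $\mu(\partial C_i)=0$, with boundary layers controlled by \Cref{res:boundR}.
\item Splitting $\xi_x$ into two equal-mass halves by coupling it with $\frac12[\delta_{-1}+\delta_1]$, so the half-width $f$ is $>0$ exactly where $\xi_x\neq\delta_0$. Each half is then replaced by its barycentre. This is justified through the dual formula and the concavity of $y\mapsto\varphi^c(y)-|y-z|^2$; the variance term exactly compensates the loss in norm.
\item Normalising the non-constant amplitude $f$ to $1$. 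This uses the truncation point of \Cref{res:technicalmanip} (iterated displacement semiconcavity), together with rescaling and restriction.
\end{enumerate}
Your localisation to ``two well-separated pieces near velocities $a<b$'' does not cover, e.g., $\xi_x$ uniform on an interval. Nor can the last step be replaced by restricting to a set where $f\in[c,c(1+\epsilon)]$. Comparison in $W_\mu$ then only yields a ratio $\geqslant 1-\epsilon$, whereas \Cref{res:charLS1} needs the exact value $1$: its proof uses $\gamma_m\to1$ to force $\tau_m\to0$. Read quantitatively, the deficit $1-\limsup_h W(\nu,\nu_h)/h$ supplied by (2) depends on the set, hence on $\epsilon$, so the argument is circular.

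\textbf{(1)$\Rightarrow$(2).} Your field is the paper's, but the key estimate is missing and misjustified. The rest of $\mu$ (atoms, $\mu\resmes A^c$) becomes negligible near $A$ not because $\mu^d$ is diffuse. The reason is that $A$ can be taken compact (subsets of sets in $\mathscr{A}$ remain in $\mathscr{A}$, and $\mu^d$ is inner regular), so that $\mu(A^{r}\setminus A)\to0$ as $r\to0$. The paper then compares $W(\mu,\exp_\mu(h_n\cdot\xi))$ with $W(\mu\resmes A,(\mu\resmes A)_{h_n})$. It does so by restriction of optimality, the second triangle inequality, and \Cref{res:smallestim} applied on each of the finitely many disjoint inflated clusters.

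Your scale is also off. At $h=\tau_ns_n$ the clusters have diameter comparable to $h$, so shifted mass can land back in $A$. One needs $\tau_ns_n\ll h\leqslant s_n/2$; the paper takes $h_n=(s_n-\tau_ns_n-\varepsilon_n)/2$.

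Your worry about $\lim$ versus $\limsup$ is legitimate. The construction only gives equality along $(h_n)$, and the full limit can fail for the unit symmetric field. However, \Cref{sec:linkTan} works with the $\limsup$ throughout (see $\Xi[\mu]$ and the remark following \Cref{res:reduc}). Your fallback is therefore exactly the reading under which the paper proves the theorem, and the unverified oscillating-amplitude construction is not needed.
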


Examples for which \Cref{intro:equivTan} fails can be found in \cite{aussedatStructureGeometricTangent2025} or \Cref{sec:LS0}. The case of dimension $d > 1$ is open. 

\medskip
In the last part of the paper, we consider the related problem of measures for which the limit sup in~\Cref{intro:limsup1} is equal to 0, instead of 1. This is still independent of $p \in [1,\infty)$. The motivation comes from the fact that for a map $g \in L^2_{\mu}(\mathbb{R}^d;\mathbb{R}^d)$, it is equivalent that $W_2(\mu,(id+h g)_{\#} \mu) = o(h)$, and that $\left<g,f\right>_{L^2_{\mu}} = 0$ for any $f \in L^2_{\mu}$ inducing a tangent element. Such $g$ are actually the $L^2_{\mu}$ vector fields for which $\div(g \mu) = 0$ in the sense of distributions, and the fact that they let $\mu$ ``infinitesimally invariant'' extends the theorem of Liouville on incompressible flows being driven by divergence-free vector fields. Once again, this does not generalize to measure-valued fields, hence the need for some regularity condition on $\mu$ under which the equivalence would stand.

\medskip 
We do not have a fully satisfactory characterization of the set of measures $\mu$ such that $W_1(\mu,\mu_h) = o(h)$, besides some regularity of the distribution function of $\mu$. However, we can exhibit such a $\mu$ that is singular with respect to the Lebesgue measure. This seems to us a curiosity in itself: indeed, from the definition, it holds that for any Lipschitz function $u : \mathbb{R} \to \mathbb{R}$, 
\begin{align*}
	\lim_{h \searrow 0} \left|\int_{x \in \Sp} \frac{\frac{1}{2} \left[u(x-h) + u(x+h)\right] - u(x)}{h} d\mu\right|
	\leqslant \Lip{u} \frac{W_1(\mu,\mu_h)}{h}
	\underset{h \searrow 0}{\longrightarrow} 0.
\end{align*}
If the absolute values were inside the first integral, we would conclude to a kind of differentiability of any Lipschitz function $\mu-$almost everywhere. However, the measures for which Lipschitz functions are differentiable a.e. are exactly the absolutely continuous measures \cite{zahorskiLensemblePointsNonderivabilite1946,fowlerSimpleProofZahorskis2008}. Here, we do not exactly consider ``differentiability'', but an averaged version on both the directions and on the points; still, it was not so obvious that some singular measures would appear.

\medskip
The paper is organized as follows. \Cref{sec:prelim} precises the notations, definitions and facts in use in the sequel. \Cref{res:charLS1} is proved in \Cref{sec:LS1}. \Cref{res:answerQ1} is proved in \Cref{sec:linkTan}. \Cref{sec:LS0} concludes with some explicit computations around the problem mentioned in the end of the introduction.

\tableofcontents

\section{Preliminaries}\label{sec:prelim}

The open ball of radius $r \geqslant 0$ around a point $x \in \Sp$ is denoted $\mathscr{B}(x,r)$. The closure of a set $A \subset \mathbb{R}$ is denoted $\overline{A}$, and its complement $A^c \coloneqq \Sp \setminus A$. A vanishing sequence $(h_n)_{n \in \mathbb{N}} \subset (0,\infty)$ is a nonincreasing sequence that converges to 0. If $\mu \in \Psp(X)$ is a Borel probability measure over a Polish space $X$, and $f : X \to Y$ is a measurable map between Polish spaces, then $f_{\#} \mu$ is the probability measure on $Y$ determined by $(f_{\#} \mu)(A) = \mu(f^{-1}(A))$ for any measurable $A \subset Y$. The set $\T\Sp \coloneqq \left\{ (x,v) \st x \in \Sp, v \in \T_x\Sp \right\}$ is isometric to $\mathbb{R}^2$. 

The following material is classical, and can be found in detailed form in \cite{santambrogioOptimalTransportApplied2015} (in particular Proposition~2.17 for the monotone plan). For the dual formulation, we adopt the sign convention of \cite[Theorem 5.10]{villaniOptimalTransport2009}.

\paragraph{Wasserstein distances}

When $\mu$ is a measure on a set $X \times Y$, we use the notation $\mu = \mu(dx,dy)$ to give names to the variables of $\mu$, and later use them in the canonical projections $\pi_x, \pi_y$. The set of measures $\xi = \xi(dx,dv) \in \Psp_2(\T\Sp)$ that satisfy $\pi_{x \#} \xi = \mu$ is denoted $\Psp_2(\T\Sp)_{\mu}$. Each element of $\Psp_2(\T\Sp)_{\mu}$ induced a curve of measures denoted
\begin{align*}
	h \mapsto \exp_{\mu}(h \cdot \xi) \coloneqq (\pi_x + h \pi_v)_{\#} \xi. 
\end{align*}
The set of transport plans between $\mu \in \Psp(X)$ and $\nu \in \Psp(Y)$ is denoted
\begin{align*}
	\Gamma(\mu,\nu) \coloneqq \left\{ \ \eta = \eta(dx,dy) \in \Psp(X \times Y) \st \pi_{x \#} \eta = \mu, \text{ and } \pi_{y \#} \eta = \nu \ \right\}.
\end{align*}
For $p \in [1,\infty)$, let $\Psp_p(\Sp)$ be the subset of $\mu \in \Psp(\Sp)$ such that $\int_{x \in \Sp} |x|^p d\mu < \infty$.
The $p-$Wasserstein distance $W_p(\mu,\nu)$ between $\mu,\nu \in \Psp_p(\Sp)$ is defined as
\begin{align*}
	W_p^p(\mu,\nu) \coloneqq \inf_{\eta \in \Gamma(\mu,\nu)} \int_{(x,y) \in \Sp^2} |x - y|^p d\eta(x,y).
\end{align*}
In general, minimizers exists but are not unique, and we denote by $\Gamma_{o,p}(\mu,\nu)$ the set of minimizers (dropping the $p$ when it is clear from the context).
In the sequel, we might take Wasserstein distances between measures $\mu,\nu$ with $0 < \mu(\Sp) = \nu(\Sp) \neq 1$; in this case, we adopt the convention 
\begin{align*}
	W_p(\mu,\nu) \coloneqq \mu(\Sp) \times W_p\left(\frac{\mu}{\mu(\Sp)}, \frac{\nu}{\nu(\Sp)}\right).
\end{align*}
Similarly, we extend $W_p(\cdot,\cdot)$ to $\Psp(\Sp)^2$ by $\infty$ on pairs $(\mu,\nu)$ such that no transport plan yields a finite cost. Since we consider mainly the distance between $\mu$ and the shifted average $\mu_h$, it always holds that $W_p(\mu,\mu_h) \leqslant h < \infty$, regardless of the moments of $\mu$.

\paragraph{Monotone plan}

In dimension one, an explicit minimizer is known, which is in addition unique if $p > 1$. This plan is called the ``monotone plan'', since it is constructed by gathering all the mass that $\mu$ puts on $(-\infty,x]$, and spread it monotonically on $(-\infty,y]$ for some $y$ such that $\nu((-\infty,y)) \leqslant \mu((-\infty,x]) \leqslant \nu((-\infty,y])$. In rigorous terms, define the distribution function of $\mu \in \Psp(\Sp)$ as
\begin{align*}
	F_{\mu} : \Sp \to [0,1], \qquad
	F_{\mu}(x) \coloneqq \mu((-\infty,x]).
\end{align*}
The function $F_{\mu}$ is nondecreasing, and admits a pseudo-inverse $F_{\mu}^{[-1]} : [0,1] \to \Sp \cup \{-\infty\}$ characterized by $F_{\mu}(x) \geqslant r$ iff $x \geqslant F_{\mu}^{[-1]}(r)$. Explicitly, there holds $F_{\mu}^{[-1]}(r) \coloneqq \inf \left\{ x \in \Sp \st F_{\mu}(x) \geqslant r \right\}$. The monotone plan between $\mu$ and $\nu$ is defined as $\eta \coloneqq (F_{\mu}^{[-1]},F_{\nu}^{[-1]})_{\#} \mathcal{L}_{[0,1]}$. By optimality, it holds for any $p \in [1,\infty)$ that 
\begin{align*}
	W_p^p(\mu,\nu) = \int_{r \in [0,1]} \left|F_{\mu}^{[-1]}(r) - F_{\nu}^{[-1]}(r)\right|^p dr. 
\end{align*}
In the case $p = 1$, the expression simplifies further in $W_1(\mu,\nu) = \int_{x \in \Sp} \left|F_{\mu}(x) - F_{\nu}(x)\right| dx$. Here note that the integral is taken with respect to the Lebesgue measure. 

\paragraph{Dual formulation}

Let $c : \Sp^2 \to \mathbb{R}^+$ be any cost of the family $c(x,y) = |x - y|^p$. 
We follow \cite{villaniOptimalTransport2009} in defining the c--transform of a function $\varphi : \Sp \to \mathbb{R}$ as 
\begin{align*}
	\varphi^c : \Sp \to \mathbb{R} \cup \{-\infty\}, 
	\qquad
	\varphi^c(y) \coloneqq \inf_{x \in \Sp} \varphi(x) + c(x,y).
\end{align*} 
The $p-$Wasserstein distance admits a dual reformulation as
\begin{align*}
	W_p^p(\mu,\nu) = \sup_{\varphi \in L^1_{\mu}(\Sp;\mathbb{R})} \int_{y \in \Sp} \varphi^c(y) d\nu - \int_{x \in \Sp} \varphi(x) d\mu.
\end{align*}
Since the cost inside the supremum cannot decrease if $\varphi$ is replaced by $(\varphi^c)_c \coloneqq \sup_{y \in \Sp} c(x,y) - \varphi^c(y)$, one can assume that $\varphi$ is of this form. In particular, if $c(x,y) = |x-y|$, one checks that taking twice c--transforms yields that $\varphi$ is 1-Lipschitz, and $\varphi^c = \varphi$. Hence, denoting $\Lipsp_1$ the set of 1-Lipschitz functions, there holds
\begin{align}\label{dual:p=1}
	W_1(\mu,\nu) = \sup_{\varphi \in \Lipsp_1} \int_{y \in \Sp} \varphi(y) d\nu - \int_{x \in \Sp} \varphi(x) d\mu.
\end{align}

\paragraph{Tangent cones in the case $p=2$}

The regular tangent cone $\rTan_{\mu} \coloneqq \overline{\left\{ (id,\nabla \varphi)_{\#} \mu \st \varphi \in \mathcal{C}^{\infty}_c(\Sp;\mathbb{R}) \right\}}^{L^2_{\mu}}$ identifies as a subset of the geometric tangent cone, defined as
\begin{align*}
	\Tan_{\mu} \coloneqq \overline{\left\{ \, \lambda \cdot \xi \st (\pi_x,\pi_x + \pi_v)_{\#} \xi \in \Gamma_o(\mu,(\pi_x+\pi_v)_{\#} \xi), \ \text{and} \  \lambda \geqslant  0 \, \right\}}^{W_{\mu}} \subset \Psp_2(\T\Sp)_{\mu}.
\end{align*}
Here, $\lambda \cdot \xi \coloneqq (\pi_x, \lambda \pi_v)_{\#} \xi$ is the scalar multiplication of a measure field $\xi$, and the distance $W_{\mu}$ generalizes the $L^2_{\mu}$ distance as
\begin{align*}
	W_{\mu}^2(\xi,\zeta) \coloneqq \inf_{\eta \in \Gamma_{\mu}(\xi,\zeta)} \int_{(x,v,w)} |v - w|^2 d\eta, 
	\qquad \text{where} \qquad
	\Gamma_{\mu}(\xi,\zeta) \coloneqq \left\{ \eta\in \Psp_2\left(\bigcup_{x \in \Sp} \{x\} \times \T_x\Sp^2\right) \st \begin{matrix} (\pi_x,\pi_v)_{\#} \eta = \xi, \text{ and} \\ (\pi_x,\pi_w)_{\#} \eta = \zeta \end{matrix} \right\}.
\end{align*} 
We shorten $W_{\mu}(\xi,(id,0)_{\#} \mu)$ as $\|\xi\|_{\mu} = \sqrt{\int_{(x,v)} |v|^2 d\xi}$.
Precisely, \cite[Chap.~4]{gigliGeometrySpaceProbability2008} shows the following link: if $\xi \in \Tan_{\mu}$, then its barycenter, defined as the unique $b \in L^2_{\mu}(\Sp;\mathbb{R})$ such that $\int_{(x,v)} \varphi(x) \psi(v) d\xi = \int_x \varphi(x) \psi(b(x)) d\mu$ for $\varphi \in \mathcal{C}_b(\Sp;\mathbb{R})$ and $\psi$ linear, induces an element of $\rTan_{\mu}$. 

Besides the distance $W_{\mu}(\cdot,\cdot)$ adapted to the tangent cone at $\mu$, one can define a metric scalar product
\begin{align*}
	\left<\xi,\zeta\right>_{\mu} 
	\coloneqq \frac{1}{2} \left[\|\xi\|_{\mu}^2 + \|\zeta\|_{\mu}^2 - W_{\mu}^2(\xi,\zeta)\right]
	= \sup_{\eta \in \Gamma_{\mu}(\xi,\zeta)} \int_{(x,v,w)} \left<v,w\right> d\eta(x,v,w).
\end{align*}
The orthogonal $\Tan_{\mu}^{\perp}$, referred to as the set of \emph{solenoidal} measure fields, is defined as the set of $\zeta$ such that $\left<\xi,\zeta\right>_{\mu} = 0$ for any $\xi \in \Tan_{\mu}$.

\paragraph{Submeasure estimate}

The following estimate seems to be folklore, and is included for lack of a precise reference. 

\begin{lem}[Large-mass submeasures of a given measure are close]\label{res:smallestim}
	Let $\mu \in \mathcal{M}_+(\Sp)$ be a nonnegative Borel measure with finite mass, supported on a compact $K$ of diameter $\diam K$. Let $0 \leqslant \varepsilon \leqslant \mu(\Sp)$, and $\alpha,\beta \in \mathcal{M}_+(\Sp)$ satisfy 
	\begin{align*}
		\alpha \leqslant \mu, \qquad
		\beta \leqslant \mu, \qquad 
		\alpha(\Sp) = \beta(\Sp) \geqslant \mu(\Sp) - \varepsilon.
	\end{align*}
	Then for any $p \in [1,\infty)$, there holds $W_p^p(\alpha,\beta) \leqslant \varepsilon (\diam K)^p$. 
\end{lem}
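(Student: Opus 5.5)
The plan is to build an explicit transport plan between $\alpha$ and $\beta$ that leaves their common part in place and moves only the remaining mass. Set $\gamma \coloneqq \alpha \wedge \beta$, the largest measure lying below both $\alpha$ and $\beta$. Concretely, writing $\alpha = f\mu$ and $\beta = g\mu$ with densities $0 \leqslant f,g \leqslant 1$ given by Radon--Nikodym (possible since $\alpha,\beta \leqslant \mu$), we take $\gamma = \min(f,g)\mu$. Then $\alpha = \gamma + \alpha'$ and $\beta = \gamma + \beta'$, where $\alpha' = (f-\min(f,g))\mu$ and $\beta' = (g - \min(f,g))\mu$ are nonnegative. Since $\alpha(\Sp) = \beta(\Sp)$, the remainders have equal mass, $\alpha'(\Sp) = \beta'(\Sp) \eqqcolon m$.

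The key step is to bound $m$ by $\varepsilon$. Since $\alpha'$ and $\beta'$ have disjoint supports in density terms, $(f - \min(f,g)) + (g - \min(f,g)) = |f-g|$, and we get
\begin{align*}
	2m = \int |f-g| \, d\mu \leqslant \int \big(1-\min(f,g)\big) d\mu - \int \big(\max(f,g) - 1\big) d\mu,
\end{align*}
where the inequality uses $|f-g| = \max(f,g)-\min(f,g)$. It is cleaner to argue directly. We have $m = \alpha(\Sp) - \gamma(\Sp)$. Also $\max(f,g) \leqslant 1$, so
\begin{align*}
	\gamma(\Sp) = \int (f + g - \max(f,g)) \, d\mu \geqslant \alpha(\Sp) + \beta(\Sp) - \mu(\Sp).
\end{align*}
Hence $m \leqslant \mu(\Sp) - \beta(\Sp) \leqslant \varepsilon$.

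Now define the plan $\eta \coloneqq (id,id)_{\#}\gamma + \frac{1}{m}\alpha' \otimes \beta'$, omitting the second term if $m = 0$. Its marginals are $\alpha$ and $\beta$. All the measures involved are supported in $K$, so the second term moves each unit of mass a distance at most $\diam K$, and the diagonal term costs nothing. Therefore
\begin{align*}
	W_p^p(\alpha,\beta) \leqslant \int |x-y|^p \, d\eta \leqslant m (\diam K)^p \leqslant \varepsilon (\diam K)^p.
\end{align*}

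One point needs checking: how this interacts with the normalization convention for measures of non-unit mass. The convention defines $W_p$ as the mass times the normalized distance. We read the claimed inequality as the unnormalized transport cost $\inf_\eta \int |x-y|^p d\eta$ over plans of mass $\alpha(\Sp)$. With the stated convention, the bound picks up a factor depending on the mass, which is harmless when $\mu(\Sp) \leqslant 1$, but this must be stated explicitly. We expect this bookkeeping, rather than the bound on $m$, to be the only delicate point. The rest is routine.
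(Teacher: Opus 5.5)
Your proof is correct and follows the paper's route: leave the common part $\alpha\wedge\beta$ in place, show the leftover mass is at most $\mu(\Sp)-\beta(\Sp)\leqslant\varepsilon$, and move that leftover at cost at most $(\diam K)^p$ per unit mass. You build $\alpha\wedge\beta$ as $\min(f,g)\mu$ from Radon--Nikodym densities rather than through the paper's partition infimum, which is equivalent and turns the mass bound into a one-liner (your abandoned first display can simply be deleted). Your normalization remark is also apt: like the paper's own proof, your argument bounds the unnormalized cost $\inf_\eta\int|x-y|^p\,d\eta$, which differs from the paper's stated convention by the factor $\alpha(\Sp)^{p-1}$, harmless here since every measure the lemma is applied to has mass at most $1$.
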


\begin{proof}
	Define $\alpha \wedge \beta$ as the nonnegative measure $(\alpha \wedge \beta)(A) \coloneqq \inf_{(A_n)_n \in \mathfrak{P}(A)} \sum_n \min(\alpha(A_n),\beta(A_n))$, where $\mathfrak{P}(A)$ is the set of measurable countable partitions $(A_n)_n$ of $A$. Then 
	\begin{align*}
		(\alpha \wedge \beta)(\Sp)
		= \inf_{(A_n)_n \in \mathfrak{P}(\Sp)} \sum_n \min(\alpha(A_n),\beta(A_n))
		\geqslant \mu(\Sp) - \sup_{(A_n)_n \in \mathfrak{P}(\Sp)} \sum_n \max((\mu - \alpha)(A_n), (\mu - \beta)(A_n)).
	\end{align*}
	Since $\max(a,b) \leqslant a+b$ whenever $a,b \geqslant 0$, and $(\mu - \alpha)(A_n), (\mu - \beta)(A_n) \geqslant 0$ by assumption, 
	\begin{align*}
		(\alpha \wedge \beta)(\Sp)
		\geqslant \mu(\Sp) - \sup_{(A_n)_n \in \mathfrak{P}(\Sp)} \sum_n \left[(\mu-\alpha)(A_n) + (\mu-\beta)(A_n)\right]
		= \mu(\Sp) - \left[(\mu-\alpha)(\Sp) + (\mu-\beta)(\Sp)\right]
		\geqslant \mu(\Sp) - 2 \varepsilon.
	\end{align*}
	As $\alpha \wedge \beta$ is a submeasure of both $\alpha$ and $\beta$, we can define a transport plan between $\alpha$ and $\beta$ sending $\alpha \wedge \beta$ on itself, and the remaining mass of $\alpha$ on that of $\beta$. The previous estimate shows that $\alpha(\mathbb{R}) - (\alpha \wedge \beta)(\mathbb{R}) \leqslant \varepsilon$, and since both $\alpha$ and $\beta$ are supported on $K$, the mass travels at most $\diam K$. Consequently, $W_p^p(\alpha,\beta) \leqslant \varepsilon \left(\diam K\right)^p$.
\end{proof}

\section{Proof of \Cref{res:charLS1}}\label{sec:LS1}

The strategy for~\Cref{res:charLS1} is to exploit the dual reformulation of the Wasserstein distance. Informally, when the quotient $W(\mu,\mu_h)/h$ approaches its maximal value $1$, the mass of $\mu$ must concentrate on points on which Kantorovich pairs $(\varphi,\varphi^c)$ satisfy a certain 3-point inequality. For the cost $c(x,y) = |x - y|$, this inequality implies that $\varphi$ looks like a downward pointing corner with at least a certain angle: this can happen on at most countably many points, that must be at some controlled distance from each other. For $c(x,y) = |x-y|^p$ with $p > 1$, we have no such interpretation, but computation shows that these points must be sufficiently separated, otherwise the condition $\varphi^c \ominus \varphi \leqslant c$ breaks. The ``separation'' precisely enforces the presence of holes of size comparable to $h$ on which $\mu$ puts very few mass, and intersecting over countably many scales, one gets to the porous sets of \Cref{intro:def:porous}. The converse is based on the construction of a particular Kantorovich potential.

\medskip
We first prove an estimate which is specific to dimension 1. 

\begin{lem}[Pointwise bound on the monotone plan]\label{res:boundR}
	Let $\zeta \in \Psp_2(\T\Sp)_{\mu}$ be such that $|v| \leqslant R$ for some $R > 0$ and $\zeta-$a.e. $(x,v) \in \T\Sp$. Then, the optimal transport plan $\eta \in \Gamma_o(\mu,\exp_{\mu}(\zeta))$ is concentrated on pairs $(x,y)$ such that $|y-x| \leqslant R$.
\end{lem}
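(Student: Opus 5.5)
The plan is to work with quantile functions. Write $\nu \coloneqq \exp_{\mu}(\zeta) = (\pi_x + \pi_v)_{\#}\zeta$. The optimal plan in the statement is the monotone plan $\eta = (F_{\mu}^{[-1]},F_{\nu}^{[-1]})_{\#}\mathcal{L}_{[0,1]}$. For $p=2$ it is the unique optimal plan; for the general claim we take it as the plan under consideration. It therefore suffices to prove the pointwise bound
\begin{align*}
	\left|F_{\nu}^{[-1]}(r) - F_{\mu}^{[-1]}(r)\right| \leqslant R \qquad \text{for a.e. } r \in (0,1).
\end{align*}
We reduce this to a comparison of distribution functions:
\begin{align*}
	F_{\mu}(x - R) \leqslant F_{\nu}(x) \leqslant F_{\mu}(x + R) \qquad \text{for all } x \in \Sp.
\end{align*}

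\textbf{Step 1: comparing distribution functions.} Since $|v| \leqslant R$ for $\zeta$-a.e.\ $(x,v)$, the inclusions of events follow directly. If $y + v \leqslant x$, then $y \leqslant x + R$. If $y \leqslant x - R$, then $y + v \leqslant x$. Integrating against $\zeta$ and using $\pi_{x\#}\zeta = \mu$ gives
\begin{align*}
	F_{\nu}(x) = \zeta\left(\{y+v \leqslant x\}\right) \leqslant \zeta\left(\{y \leqslant x+R\}\right) = F_{\mu}(x+R),
\end{align*}
and symmetrically $F_{\mu}(x-R) \leqslant F_{\nu}(x)$.

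\textbf{Step 2: passing to pseudo-inverses.} We use the characterization $F(x) \geqslant r \iff x \geqslant F^{[-1]}(r)$. Let $x = F_{\nu}^{[-1]}(r)$. Right-continuity gives $F_{\nu}(x) \geqslant r$, so $F_{\mu}(x+R) \geqslant r$. Hence $x + R \geqslant F_{\mu}^{[-1]}(r)$, that is, $F_{\mu}^{[-1]}(r) - F_{\nu}^{[-1]}(r) \leqslant R$.

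In the other direction, let $x = F_{\mu}^{[-1]}(r)$. Then $F_{\mu}(x) \geqslant r$, and therefore $F_{\nu}(x+R) \geqslant F_{\mu}(x) \geqslant r$. This gives $F_{\nu}^{[-1]}(r) \leqslant F_{\mu}^{[-1]}(r) + R$.

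For $r \in (0,1)$ both pseudo-inverses are finite, so the value $-\infty$ at $r=0$ only affects a null set. Pushing $\mathcal{L}_{[0,1]}$ forward then shows that $\eta$ is concentrated on $\{|y-x| \leqslant R\}$.

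The only delicate point is the convention issue: keeping track of which inequalities are strict in the pseudo-inverse characterization, and handling the endpoints $r \in \{0,1\}$. These are routine. If uniqueness of the optimal plan is needed for $p=1$, we would instead state the result for the monotone plan, which is the plan the paper uses.
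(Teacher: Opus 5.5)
Your proof is correct and is essentially the paper's argument: you compare distribution functions via $F_\nu(x+R)\geqslant F_\mu(x)$ and its mirror inequality, then transfer to the pseudo-inverses using right-continuity. The only cosmetic difference is that the paper obtains the mirror inequality by reversing the field to $(\pi_x+\pi_v,-\pi_v)_{\#}\zeta$, whereas you read it off directly from $|v|\leqslant R$. Your caveat about $p=1$ is also well placed: non-monotone $W_1$-optimal plans can violate the bound (for $\mu=\frac12(\delta_0+\delta_1)$, $v\equiv 1$, $R=1$, the plan $0\mapsto 2$, $1\mapsto 1$ is also $W_1$-optimal), so the statement must be read for the monotone plan, as the paper implicitly does.
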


\begin{proof}
	Let $\nu \coloneqq \exp_{\mu}(\zeta)$, and denote $F_{\mu},F_{\nu} : \mathbb{R} \to [0,1]$ the respective distribution functions of $\mu,\nu$, i.e. $F_{\mu}(x) = \mu((-\infty,x])$. 
	By the explicit expression of the optimal transport plan in dimension one \cite[Chap.~2]{santambrogioOptimalTransportApplied2015}, $\eta$ is concentrated on pairs $(x,y)$ such that for some $r \in \mathbb{R}$, 
	\begin{align*}
		x = \inf \left\{ \overline{x} \st F_{\mu}(\overline{x}) \geqslant r \right\}, 
		\qquad \text{and} \qquad
		y = \inf \left\{ \overline{y} \st F_{\nu}(\overline{y}) \geqslant r \right\}.
	\end{align*}
	As $F_{\mu},F_{\nu}$ are nondecreasing and upper semi-continuous, the $r$ associated to $(x,y)$ also satisfies $F_{\mu}(x) \geqslant r$ and $F_{\nu}(y) \geqslant r$ (although the inequality can be strict in case of Dirac mass). By assumption, for any $x \in \Sp$, one has 
	\begin{align*}
		F_{\nu}(x + R) = \int_{(y,w)} \ind_{(-\infty,x+R]}(y+w) d\zeta \geqslant \int \ind_{(-\infty,x]}(y) d\mu = F_{\mu}(x) \geqslant r,
	\end{align*}
	hence $y \leqslant x + R$. Symmetrically, one has $\mu = \exp_{\nu}(\gamma)$ for $\gamma \coloneqq (\pi_x + \pi_v, - \pi_v)_{\#} \zeta$, which also satisfies $|v| \leqslant R$ for $\gamma-$a.e. $(x,v)$; hence $F_{\mu}(y + R) \geqslant F_{\nu}(y) \geqslant r$ for any $(x,y)$ in $\supp \eta$ and associated $r$, and we conclude that $x \leqslant y + R$. 
\end{proof}

\begin{rem}\label{rem:limindepp}
	As a consequence, the property \Cref{intro:limsup1} is independent of $p \in [1,\infty)$. Indeed, by Cauchy-Schwarz, the Wasserstein distances are ordered as $W_p(\mu,\nu) \leqslant W_q(\mu,\nu)$ for $p \leqslant q$. On the other hand, by \Cref{res:boundR}, the monotone transport plan $\eta_h$ between $\mu$ and $\mu_h$ satisfies $|y-x| \leqslant h$ almost everywhere. Since this plan is optimal for any $p \in [1,\infty)$, it follows that
	\begin{align*}
		\frac{W_1(\mu,\mu_h)}{h}
		\leqslant \frac{W_p(\mu,\mu_h)}{h}
		= \left(\frac{W_p^p(\mu,\mu_h)}{h^p}\right)^{1/p}
		= \left(\frac{\int |y-x|^p d\eta_h}{h^p}\right)^{1/p}
		\leqslant \left(\frac{h^{p-1} \int |y-x| d\eta_h}{h^p}\right)^{1/p}
		= \left(\frac{W_1(\mu,\mu_{h})}{h}\right)^{1/p}.
	\end{align*}
	Hence, both the set of measures $\mu$ such that \Cref{intro:limsup1} holds, and such that $W_p(\mu,\mu_h) = o(h)$, are independent of $p \in [1,\infty)$.
\end{rem}

The following lemma is the core of the argument of \Cref{res:charLS1}. It states that for any $\varphi$, the points where $\frac{1}{2} \left[\varphi^c(x-h) + \varphi^c(x+h)\right] - \varphi(x)$ is comparable to $h^p$ must be separated by some distance itself comparable to $h$.

\begin{lem}\label{res:coarsePorous}
	Let $p \in [1,\infty)$ and $\kappa_{\gamma} \coloneqq (2 \gamma - 1)^{1/p}$ for any $\gamma \in (1/2,1]$. Given $\varphi : \Sp \to \mathbb{R}$, $\gamma \in (1/2,1]$, and $h > 0$, let 
	\begin{align*}
		S \coloneqq \left\{ x \in \Sp \st \frac{\frac{1}{2} \left[\varphi^c(x-h) + \varphi^c(x+h)\right] - \varphi(x)}{h^p} \geqslant \gamma \right\}.
	\end{align*}
	Then for any $x \in S$, there holds $S \cap \mathscr{B}(x,h(1+\kappa_\gamma)) \subset \overline{\mathscr{B}}(x,h(1-\kappa_{\gamma}))$.
\end{lem}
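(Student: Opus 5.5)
The plan is to argue by contradiction using only the defining inequality of the $c$--transform, namely $\varphi^c(z) \leqslant \varphi(w) + |w-z|^p$ for all $w,z \in \Sp$, applied at four well-chosen pairs of points. Fix $x \in S$ and let $y \in S \cap \mathscr{B}(x,h(1+\kappa_\gamma))$. Write $d \coloneqq |y-x|$. By symmetry, and after possibly exchanging the roles of $x$ and $y$, we may assume $y = x + d$ with $d \geqslant 0$. The claim $y \in \overline{\mathscr{B}}(x,h(1-\kappa_\gamma))$ is equivalent to $d \leqslant h(1-\kappa_\gamma)$. Since $d < h(1+\kappa_\gamma)$ by assumption, it suffices to show that $|d-h| \geqslant \kappa_\gamma h$.

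Since $x,y \in S$, the quotients in the definition of $S$ are well defined and at least $\gamma$. This forces $\varphi(x),\varphi(y)$ to be finite, with the $\varphi^c$ values involved not equal to $-\infty$; I would make this explicit with a short remark on conventions. Adding the two membership conditions gives
\begin{align*}
	\varphi^c(x-h) + \varphi^c(x+h) + \varphi^c(y-h) + \varphi^c(y+h) \geqslant 2\varphi(x) + 2\varphi(y) + 4\gamma h^p.
\end{align*}
Now I bound each term on the left by testing the infimum defining $\varphi^c$ at either $x$ or $y$, choosing whichever point is the natural one:
\begin{itemize}
	\item $\varphi^c(x-h) \leqslant \varphi(x) + h^p$;
	\item $\varphi^c(y+h) \leqslant \varphi(y) + h^p$;
	\item $\varphi^c(x+h) \leqslant \varphi(y) + |d-h|^p$, because $|y-(x+h)| = |d-h|$;
	\item $\varphi^c(y-h) \leqslant \varphi(x) + |d-h|^p$.
\end{itemize}
Summing these four bounds, the $\varphi$ terms cancel exactly against the right-hand side above. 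We are left with $4\gamma h^p \leqslant 2h^p + 2|d-h|^p$, that is, $|d-h|^p \geqslant (2\gamma-1)h^p$. Taking $p$-th roots gives $|d-h| \geqslant \kappa_\gamma h$.

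Combining this with $d < h(1+\kappa_\gamma)$ excludes the interval $(h(1-\kappa_\gamma),h(1+\kappa_\gamma))$. Hence $d \leqslant h(1-\kappa_\gamma)$, which is the claim. The case $d=0$ is trivial.

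The only delicate point is the bookkeeping of infinite values, which I handle via the finiteness remark above. The key idea is the cross-pairing of the inner shifts $x+h$ and $y-h$ with the opposite point. The computation itself is routine once this pairing is chosen.
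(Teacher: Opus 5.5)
Your proposal is correct and follows the paper's proof essentially line by line. The paper also sums the two membership inequalities and bounds $\varphi^c(x-h),\varphi^c(y-h)$ by testing the infimum at $x$, and $\varphi^c(x+h),\varphi^c(y+h)$ by testing it at $y$, to reach $h^p(2\gamma-1)\leqslant |h-\delta|^p$; your remark that the $\varphi^c$ values involved cannot be $-\infty$ is a small extra piece of care that the paper leaves implicit.
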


\begin{proof}
	Assume that $x, y \in S$ with $x \leqslant y$, and let $\delta \coloneqq |y - x|$. By definition of $S$, there holds
	\begin{align*}
		2 \gamma h^p
		\leqslant \frac{\varphi^c(x-h) + \varphi^c(x+h)}{2} - \varphi(x) + \frac{\varphi^c(y-h) + \varphi^c(y+h)}{2} - \varphi(y).
	\end{align*}
	Recall that $\varphi^c(\bdot) \coloneqq \inf_{z \in \Sp} \varphi(z) + |z - \bdot|^p$. Therefore, we can bound the first term by $\psi_{x-h}(z) \coloneqq \varphi(z) + |z - (x-h)|^p$ evaluated at $z = x$, the second term by $\psi_{x+h}(y)$, the third term by $\psi_{y-h}(x)$, and the fourth by $\psi_{y+h}(y)$, to get
	\begin{align*}
		2 \gamma h^p
		\leqslant \frac{\varphi(x) + \varphi(y)}{2} + \frac{h^p + |x+h-y|^p}{2} - \varphi(x) + \frac{\varphi(x) + \varphi(y)}{2} + \frac{|y-h-x|^p + h^p}{2} - \varphi(y).
	\end{align*}
	Rearranging, and using that $2 \gamma - 1 > 0$ since $\gamma > 1/2$, we find
	\begin{align*}
		h^p (2 \gamma - 1) \leqslant |h - \delta|^p
		\qquad \implies \qquad
		h \kappa_{\gamma} = h (2 \gamma - 1)^{1/p} \leqslant |h - \delta|.
	\end{align*}
	The above inequality forces $\delta = |y-x|$ to be either in $[0,h(1-\kappa_{\gamma})]$, or in $[h (1 + \kappa_{\gamma}),\infty)$, so that all points of $S$ that are at distance strictly less than $h (1+\kappa_{\gamma})$ of $x$ must belong to $\overline{\mathscr{B}}(x,h(1-\kappa_{\gamma}))$. 
\end{proof}

\begin{rem}[Taking $\gamma < 1/2$]\label{rem:coarsePorousAnyGamma}
	\Cref{res:coarsePorous} can be proved for arbitrarily small $\gamma > 0$, up to taking more than two points $x,y$: one considers a chain $x_0,x_1,\cdots,x_N$ of points of $S$, sums the defining inequalities, and bounds each $\varphi^c(x_i \pm h)$ by the values at the left and right neighbours of the chain. The corresponding $\kappa$ changes but is still positive.
\end{rem}

\begin{proof}[Proof of \Cref{res:charLS1}]
	We start with the implication \cref{item:charLS1:limit} $\Rightarrow$ \cref{item:charLS1:concen}. Let $(h_n)_{n \in \mathbb{N}} \subset (0,1]$ be a vanishing sequence along which the limit sup is reached. For each $n$, pick $\iota_n \geqslant 0$ going to $0$ with $n$, and then $\varphi_n : \Sp \to \mathbb{R} \cup \{+\infty\}$ be $\iota_n-$optimal in the dual formulation of the Wasserstein distance, i.e. 
	\begin{align}\label{proof:charLS1:cheby}
		1 - \iota_n
		\leqslant \frac{W_p^p(\mu,\exp_{\mu}(h_n \cdot \xi))}{h_n^p} 
		&\leqslant \frac{1}{h_n^p} \int_{y \in \Sp} \varphi_n^c(y) d[\exp_{\mu}(h_n \cdot \xi)] - \frac{1}{h_n^p} \int_{x \in \Sp} \varphi_n(x) d\mu + \iota_n \notag \\
		&= \int_{x \in \Sp} \frac{\frac{1}{2} \left[\varphi_n^c(x-h_n) + \varphi_n^c(x+h_n)\right] - \varphi_n(x)}{h_n^p} d\mu + \iota_n.
	\end{align}
	Let $\varepsilon > 0$, and $(n_m)_{m}$ be a subsequence such that $\iota_{n_m} \leqslant \varepsilon 2^{-2(m+2)}$. For readability, relabel $\iota_{n_m},h_{n_m},\varphi_{n_m}$ as $\iota_m,h_m,\varphi_m$ respectively. Let $\gamma_m \coloneqq 1 - 2^{-(m+2)} \subset (1/2,1)$, and for each $m$, let 
	\begin{align*}
		S_m \coloneqq \left\{ x \in \Omega \st \frac{\frac{1}{2} \left[\varphi_m^c(x-h_m) + \varphi_m^c(x+h_m)\right] - \varphi_m(x)}{h_m^p} \geqslant \gamma_m \right\}.
	\end{align*}
	By definition of the $c$-transform, $\frac{1}{2} \left[\varphi_m^c(x-h_m) + \varphi_n^c(x+h_m)\right] - \varphi_m(x) \leqslant h_m^p$ always holds, so that \Cref{proof:charLS1:cheby} gives
	\begin{align*}
		1 - \iota_m
		\leqslant \int_{S_m} 1 d\mu + \int_{S_m^c} \gamma_m d\mu + \iota_m
		= 1 - \mu(S_m^c) + \gamma_m	\mu(S_m^c) + \iota_m.	
	\end{align*}
	In consequence, $\mu(S_m^c) \leqslant \frac{2 \iota_m}{1-\gamma_m} \leqslant \varepsilon 2^{-(m+1)}$. The measurable set $A \coloneqq \bigcap_{m \in \mathbb{N}} S_m$ satisfies $\mu(A) \geqslant 1 - \sum_{m} \mu(S_m^c) = 1 - \varepsilon$, so we just have to prove that it belongs to $\mathscr{A}$. To this aim, consider the vanishing sequence $s_m \coloneqq h_m (1 + \kappa_m)$ with $\kappa_m = (2\gamma_m - 1)^{1/p}$, and $\tau_m \coloneqq (1-\kappa_m)/(1+\kappa_m)$. By \Cref{res:coarsePorous}, for each $m$, there holds
	\begin{align*}
		A \cap \mathscr{B}(x,s_m)
		\subset S_m \cap \mathscr{B}(x,s_m) 
		\subset \overline{\mathscr{B}}(x,s_m \tau_m)
		\qquad \forall x \in S_m.
	\end{align*}
	Passing to the supremum over $x \in A \subset S_m$, we get
	\begin{align*}
		\sup_{x \in A} \inf \left\{ \tau \in (0,1) \st A \cap \mathscr{B}(x,s_m) \subset \overline{\mathscr{B}}(x,s_m \tau) \right\}
		\leqslant \tau_m
		= \frac{1 - \kappa_m}{1 + \kappa_m}
		= \frac{1 - (2 \gamma_m - 1)^{1/p}}{1 + (2 \gamma_m - 1)^{1/p}}.
	\end{align*}
	Since $\gamma_m \to_m 1$, we may pass to the limit in $m \to \infty$ and conclude that $A \in \mathscr{A}$.
	
	Assume now that \Cref{item:charLS1:concen} holds. We prove \cref{item:charLS1:limit} for $p=1$, and deduce it for any $p \in [1,\infty)$ by \Cref{rem:limindepp}. Let $\varepsilon > 0$ and $A \in \mathscr{A}$ such that $\mu(A) \geqslant 1 - \varepsilon$. By definition, there exist vanishing sequences $(s_n)_n,(\tau_n)_n \subset (0,1)$ such that $A \cap \mathscr{B}(x,s_n) \subset \overline{\mathscr{B}}(x,s_n \tau_n)$ for any $n \in \mathbb{N}$ and $x \in A$. Let $\psi : x \mapsto d(x,A) = \inf_{z \in A} d(x,z)$ be the (1-Lipschitz) distance function to $A$. Then, by the dual formula~\Cref{dual:p=1},
	\begin{align*}
		\frac{W_1(\mu,\exp_{\mu}(h \cdot \xi))}{h} 
		\geqslant \frac{1}{h} \left[\int_{y \in \Sp} \psi(y) d\exp_{\mu}(h \cdot \xi) - \int_{x \in \Sp} \psi(x) d\mu\right]
		= \int_{x \in \Sp} \frac{\frac{1}{2} \left[\psi(x+h) + \psi(x-h)\right] - \psi(x)}{h} d\mu.
	\end{align*}
	Taking $h_n \coloneqq \frac{s_n + \tau_n s_n}{2}$, we know that for any $x \in A$, the points $x \pm h_n$ are at distance at least $\frac{s_n (1 - \tau_n)}{2}$ of any point of $A$. Hence $\psi(x \pm h_n) \geqslant \frac{s_n (1 - \tau_n)}{2}$ on $A$. On the other hand, one has $\left|\frac{1}{2} \left[\psi(x+h) + \psi(x-h)\right] - \psi(x)\right|/h \leqslant 1$ since $\psi$ is 1-Lipschitz. Hence
	\begin{align*}
		\int_{x \in A \cup A^c} \frac{\frac{1}{2} \left[\psi(x+h_n) + \psi(x-h_n)\right] - \psi(x)}{h_n} d\mu 
		\geqslant \int_{x \in A} \frac{s_n (1 - \tau_n)}{2 h_n} d\mu - \mu(A^c)
		\geqslant \mu(A) \frac{1 - \tau_n}{1 + \tau_n} - \varepsilon.
	\end{align*}
	Taking the limit in $n \to \infty$, we get that 
	\begin{align*}
		\limsup_{n \to \infty} \frac{W_1(\mu, \exp_{\mu}(h \cdot \xi))}{h}
		\geqslant \mu(A) - \varepsilon 
		\geqslant 1 - 2 \varepsilon
		= \|\xi\|_{\mu} - 2 \varepsilon.
	\end{align*}
	Letting $\varepsilon \searrow 0$, we conclude that \Cref{item:charLS1:limit} holds. 
\end{proof}

\section{Link with the geometric tangent cone to $\Psp_2(\Sp)$}\label{sec:linkTan}

In this section, we fix $p = 2$ and denote $W(\cdot,\cdot)$ the corresponding Wasserstein distance. Our aim is to characterize the measures $\mu \in \Psp_2(\Sp)$ for which $\xi \in \Tan_{\mu}$ if and only if 
\begin{align}\label{limitsupcondition}
	\lim_{h \searrow 0} \frac{W(\mu,\exp_{\mu}(h \cdot \xi))}{h} = \|\xi\|_{\mu}.
\end{align}
The main difficulty is to show that whenever some $\xi \notin \Tan_{\mu}$ does satisfy \Cref{limitsupcondition}, then some nonatomic submeasure $\sigma$ of $\mu$ also does with the symmetric unit measure field $\frac{1}{2} \left[(id,-1)_{\#} \sigma + (id,1)_{\#} \sigma\right]$.

\subsection{Reduction to centred measure fields}

We start by proving that whenever $\zeta \in \Psp_2(\T\Sp)_{\mu}$ satisfies \Cref{limitsupcondition}, then the centred field $\zeta^0 \coloneqq (\pi_x, \pi_v - \bary{\xi}{}(\pi_x))_{\#} \zeta$, obtained by removing the barycenter of $\zeta$, also does. The intuition behind is that map-induced and centred measure fields are orthogonal for the metric scalar product, i.e. for any $b \in L^2_{\mu}$, there holds
\begin{align*}
	\left<\zeta^0, (id,b)_{\#} \mu\right>_{\mu}
	= \int_{x \in \Sp} \int_{v \in \T_x\Sp} \left<v,b(x)\right> d\zeta_x^0(v) d\mu(x)
	= 0.
\end{align*}
The same computation shows that $\|\zeta\|_{\mu}^2 = \|\zeta^0\|_{\mu}^2 + \|b\|_{L^2_{\mu}}^2$. This ``infinitesimal'' orthogonality appears in a weaker form before the limit, as shown by the following lemma.

\begin{lem}\label{res:removebary}
	Let $\zeta = (\pi_x,\pi_v + b(\pi_x))_{\#} \zeta^0$ for some $\zeta^0$ with null barycenter. Then 
	\begin{align}\label{removebary:statement}
		\limsup_{h \searrow 0} \frac{W^2(\mu,\exp_{\mu}(h \cdot \zeta))}{h^2}
		\leqslant \limsup_{h \searrow 0} \frac{W^2(\mu,\exp_{\mu}(h \cdot \zeta^0))}{h^2} + \|b\|_{\mu}^2.
	\end{align}
	Consequently, if the left hand-side of \Cref{removebary:statement} is equal to $\|\zeta\|_{\mu}^2 = \|\zeta^0\|_{\mu}^2 + \|b\|_{L^2_{\mu}}^2$, then the centred measure field $\zeta^0$ satisfies the limit sup condition~\Cref{limitsupcondition}.
\end{lem}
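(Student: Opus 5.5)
We prove \Cref{removebary:statement} through the dual formulation of $W^2$, splitting the cost the same way $\|\zeta\|_\mu^2$ splits. Fix $h>0$ and write $\nu_h \coloneqq \exp_\mu(h\cdot\zeta)$, $\nu_h^0 \coloneqq \exp_\mu(h\cdot\zeta^0)$. A direct coupling argument fails: composing an optimal plan from $\mu$ to $\nu_h^0$ with the shift $y \mapsto y + h b(x)$ only gives the triangle-type bound $(W(\mu,\nu_h^0) + h\|b\|)^2$. We therefore work with a Kantorovich potential.

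\textbf{Reduction via an optimal potential.} Let $\varphi$ be an optimal (or $\iota$-optimal) potential for $W^2(\mu,\nu_h)$, chosen $c$-concave. Then
\begin{align*}
W^2(\mu,\nu_h) = \int_{(x,v)} \left[\varphi^c(x + h v + h b(x)) - \varphi(x)\right] d\zeta^0(x,v).
\end{align*}
For each $x$ the $c$-transform inequality $\varphi^c(y) \leqslant \varphi(z) + |z-y|^2$ is available with any comparison point $z$. We choose $z = x + h b(x)$, i.e. we compare against the potential shifted by the barycenter:
\begin{align*}
\varphi^c(x+hv+hb(x)) - \varphi(x) \leqslant \left[\tilde\varphi^c_x(x+hv) - \tilde\varphi(x)\right] + \text{(correction)}.
\end{align*}
The cleaner route is to use the quadratic structure: $|z-y-hb|^2 = |z-y|^2 - 2h\langle b, z-y\rangle + h^2|b|^2$. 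Hence the translated potential $\tilde\varphi(z) \coloneqq \varphi(z+hb) $ gives, pointwise in $x$,
\begin{align*}
\varphi^c(x+hv+hb(x)) - \varphi(x) \leqslant \inf_z \left[\varphi(z) + |z-x-hv-hb(x)|^2\right] - \varphi(x).
\end{align*}
We then evaluate this against the optimal potential for $\nu_h^0$ with the change of variable $z \mapsto z + hb(x)$.

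\textbf{Main obstacle.} The difficulty is that $b$ depends on $x$, so the translation is not global. Our first step is to reduce to $b$ continuous and bounded by density in $L^2_\mu$; the left-hand side of \Cref{removebary:statement} is stable under this approximation, since $W(\exp_\mu(h\cdot\zeta),\exp_\mu(h\cdot\zeta'))\leqslant h\|b-b'\|_{L^2_\mu}$. Next we localize: on scales where $b$ is nearly constant, $b \approx b_i$ on small intervals $I_i$. Within each $I_i$, the cross term $-2h\int\langle b_i, v\rangle\, d\zeta^0_x(v)\, d\mu$ vanishes exactly because $\zeta^0$ has null barycenter.

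Concretely, we build a plan from $\mu$ to $\nu_h$ as follows. We take the optimal plan $\eta^0_h$ from $\mu$ to $\nu_h^0$, which moves mass by at most $h\|v\|_\infty$ by \Cref{res:boundR} after truncating $v$, and then shift each target point by $hb(x)$. Its cost is
\begin{align*}
\int |y - x + hb(x)|^2 d\eta_h^0 = W^2(\mu,\nu_h^0) + h^2\|b\|^2_{L^2_\mu} + 2h\int \langle y-x, b(x)\rangle d\eta^0_h.
\end{align*}
It is not obvious that this plan has the right marginal, so instead we keep the natural plan $(\pi_x,\pi_x+h\pi_v+hb)$ from $\zeta^0$ and note that it shares the same structure. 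For the cross term we use the continuity of $b$ together with $|y-x|\leqslant Ch$ on $\supp\eta^0_h$. This gives $b(x)=b(y)+o(1)$ and $\int\langle y-x,b(y)\rangle d\eta_h^0 = \int b\, d\nu_h^0 - \int b\, d\mu$ up to $o(h)$. The latter equals $h\int\langle v, b'\rangle$-type terms, of order $o(h)$ by the null barycenter, so the cross term is $o(h^2)$ after dividing by $h^2$. The shifted plan still has the wrong second marginal, but it is close to $\nu_h$, and we correct that marginal with \Cref{res:smallestim}-type estimates at cost $o(h^2)$.

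Dividing by $h^2$ and taking $\limsup$ yields \Cref{removebary:statement}. For the consequence we combine three facts. First, $W(\mu,\exp_\mu(h\cdot\zeta^0))\leqslant h\|\zeta^0\|_\mu$ always holds. Second, $\|\zeta\|_\mu^2 = \|\zeta^0\|_\mu^2 + \|b\|^2$. Third, equality on the left of \Cref{removebary:statement} holds by hypothesis. Together these force $\limsup_h W^2(\mu,\exp_\mu(h\cdot\zeta^0))/h^2 = \|\zeta^0\|_\mu^2$, which is \Cref{limitsupcondition} for $\zeta^0$ in its limsup form.
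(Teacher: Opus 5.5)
Your plan (reduce to $|v|\leqslant R$ and continuous bounded $b$, shift the optimal plan $\eta^0_h\in\Gamma_o(\mu,\nu^0_h)$ by $hb(x)$, show the cross term is $o(h^2)$, then repair the second marginal) can be made to work, and your deduction of the ``consequently'' part is fine. However, the two estimates that carry the argument are not established.

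\textbf{The cross term.} From $b(x)=b(y)+o(1)$ you can only swap $b(x)$ and $b(y)$ inside $\int (y-x)\,b\,d\eta^0_h$. This does not connect it to $\int b\,d\nu^0_h-\int b\,d\mu=\int (b(y)-b(x))\,d\eta^0_h$, which is a different quantity. For merely continuous $b$ your claimed identity is false: take $\mu=\delta_0$, $\zeta^0=\frac12[\delta_{(0,-1)}+\delta_{(0,1)}]$ and $b(t)=\min(|t|,1)$. Then the cross term is $0$, while $\int b\,d\nu^0_h-\int b\,d\mu=h$. The missing idea is to use the primitive $B(t)\coloneqq\int_0^t b$, which is a one-dimensional device. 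Let $\omega_b$ be the modulus of continuity of $b$, and recall $|y-x|\leqslant Rh$ on $\supp\eta^0_h$ by \Cref{res:boundR}. Then
\begin{align*}
\int (y-x)\,b(x)\,d\eta^0_h
&= \int B\,d\nu^0_h-\int B\,d\mu + O\bigl(Rh\,\omega_b(Rh)\bigr) \\
&= \int_{(x,v)} \bigl[B(x+hv)-B(x)\bigr]\,d\zeta^0 + o(h)
= h\int_{(x,v)} b(x)\,v\,d\zeta^0 + o(h) = o(h),
\end{align*}
and the null barycenter is used only in the last step.

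\textbf{The marginal correction.} You assert this rather than prove it, and \Cref{res:smallestim} is not the right tool: $\tilde\nu_h\coloneqq(\pi_y+hb(\pi_x))_{\#}\eta^0_h$ and $\nu_h$ are not large submeasures of a common measure. What works is a direct comparison. Take any $\pi\in\Gamma(\mu,\nu^0_h)$ concentrated on $\{|y-x|\leqslant Rh\}$. The coupling $(\pi_y+hb(\pi_x),\pi_y+hb(\pi_y))_{\#}\pi$ moves every point by at most $h\,\omega_b(Rh)$. Hence $(\pi_y+hb(\pi_x))_{\#}\pi$ is within distance $h\,\omega_b(Rh)$ of $(id+hb)_{\#}\nu^0_h$, whatever $\pi$ is. Apply this to $\eta^0_h$ and to the natural plan $(\pi_x,\pi_x+h\pi_v)_{\#}\zeta^0$, whose image is exactly $\nu_h$. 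This gives $W(\tilde\nu_h,\nu_h)\leqslant 2h\,\omega_b(Rh)=o(h)$, which is what the triangle inequality needs.

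\textbf{Smaller points.} The natural plan alone only yields the trivial bound $h^2\|\zeta\|_\mu^2$, so your remark that it ``shares the same structure'' does not advance the argument. The opening dual-potential paragraph is never completed. Truncating $v$ destroys the null barycenter, so you must recentre. You also need $W_\mu$-continuity of the right-hand side in $\zeta^0$, not only of the left-hand side in $b$.

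\textbf{Comparison with the paper.} Once repaired, your route differs from the paper's. The paper takes $b$ piecewise constant on intervals $C_i$ with $\mu(\partial C_i)=0$, so that on each piece the cross term vanishes exactly, because it depends only on the marginals. It splits the cost by joint convexity of $W^2$. It then pays for recombining $\sum_i W^2(\mu_i,\exp_{\mu_i}(h\cdot\zeta^0_i))$ into $W^2(\mu,\exp_\mu(h\cdot\zeta^0))$ via the monotone plan near $\partial C_i$. Your ``localize'' remark is exactly this mechanism, but you never carry it out. Working with continuous $b$ trades that recombination step for the primitive and the displacement comparison, which is somewhat shorter.
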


\begin{proof}
	
	For any $\xi,\zeta \in \Psp_2(\T\Sp)_{\mu}$, there holds 
	\begin{align*}
		\left|\limsup_{h \searrow 0} \frac{W(\mu,\exp_{\mu}(h \cdot \xi))}{h} - \limsup_{h \searrow 0} \frac{W(\mu,\exp_{\mu}(h \cdot \zeta))}{h}\right|
		\leqslant W_{\mu}(\xi,\zeta)
		= \sqrt{W_{\mu}^2(\xi^0,\zeta^0) + \|b_{\xi} - b_{\zeta}\|_{L^2_{\mu}}^2}.
	\end{align*}
	Hence both sides of \Cref{removebary:statement} are continuous with respect to $\zeta^0$ and $b$, respectively in the $W_{\mu}-$ and $L^2_{\mu}-$topologies. Consequently, we might prove \Cref{removebary:statement} in the case where $|v| \leqslant R$ for $\zeta^0-$a.e. $(x,v)$ and some $R > 0$, and $b = \sum_{i=1}^N b_i \ind_{C_i}$ for a partition $(C_i)_{i=1}^N$ of $\Sp$ into finitely many intervals such that $\mu(\partial C_i) = 0$ for each $i$. 
	
	Denote $\mu_i \coloneqq \mu \resmes C_i$, with $\zeta_i \coloneqq \zeta \resmes \{(x,v) \, |\, x \in C_i\}$, and $\zeta_i^0$ defined in the same way. Then $\mu = \sum_i \mu_i$, and by linearity of the pushforward, $\exp_{\mu}(h \cdot \zeta) = \sum_i \exp_{\mu_i}(h \cdot \zeta_i)$. 	
	By the convexity of the squared Wasserstein distance in the Banach space of measures, 
	\begin{align*}
		W^2(\mu,\exp_{\mu}(h \cdot \zeta))
		\leqslant \sum_{i=1}^N W^2(\mu_i, \exp_{\mu_i}(h \cdot \zeta_i)).
	\end{align*}
	By definition of $C_i$, $\zeta_i = (\pi_x,\pi_v + b_i)_{\#} \zeta_i^0$, with $b_i \in \mathbb{R}$ a constant. Let $\eta = \eta(dx,dy) \in \Gamma_o(\mu_i,\exp_{\mu_i}(h \cdot \zeta_i^0))$. Then $(\pi_x, \pi_y + h b_i)_{\#} \eta$ is a transport plan between $\mu_i$ and $\exp_{\mu}(h \cdot \zeta_i)$, so that
	\begin{align*}
		W^2(\mu_i, \exp_{\mu_i}(h \cdot \zeta_i)) 
		\leqslant \int_{(x,y) \in \Sp^2} |x - (y + h b_i)|^2 d\eta 
		= W^2(\mu_i, \exp_{\mu_i}(h \cdot \zeta_i^0)) - 2 \int_{(x,y) \in \Sp^2} \left<x - y, h b_i\right> d\eta + \mu(C_i) |h b_i|^2.
	\end{align*}
	Since $\int y d\eta = \int (x + h v) d\zeta_i^0 = \int x d \mu + 0$, the middle term vanishes. Summing over $i$, we obtain
	\begin{align}\label{removebary:firstestimate}
		W^2(\mu,\exp_{\mu}(h \cdot \zeta))
		\leqslant \sum_{i=1}^N W^2(\mu_i, \exp_{\mu_i}(h \cdot \zeta_i^0)) + h^2 \|b\|_{\mu}^2.
	\end{align}
	Stays to show that $\sum_{i=1}^N W^2(\mu_i, \exp_{\mu_i}(h \cdot \zeta_i^0)) \leqslant W^2(\mu,\exp_{\mu}(h \cdot \zeta^0)) + h^2 O(h)$. To this aim, let $\eta \in \Gamma_o(\mu,\exp_{\mu}(h \cdot \zeta^0))$, and decompose it as $\sum_i \eta_{i}$ for $\eta_{i} \coloneqq \eta_h \resmes C_i \times \Sp$. Let also $\nu_i \coloneqq \pi_{y \#} \eta_{i}$. Since $\sum_i \nu_i = \exp_{\mu}(h \cdot \zeta^0) = \sum_i \exp_{\mu_i}(h \cdot \zeta_i^0)$, there holds
	\begin{align}\label{removebary:sumFi}
		\sum_{i=1}^N F_{\nu_i} = F_{\exp_{\mu}(h \cdot \zeta^0)} = \sum_{i=1}^N F_{\exp_{\mu_i}(h \cdot \zeta_i^0)}.
	\end{align}
	Here we recall that $F_{\sigma}(x) \coloneqq \sigma((-\infty,x])$ for any measure $\sigma$. 
	However, by \Cref{res:boundR}, the plan $\eta$ is supported on pairs $(x,y)$ with $|y-x| \leqslant h R$. Consequently, both $\nu_i$ and $\exp_{\mu_i}(h \cdot \zeta_i^0)$ are supported on points that are at distance at most $h R$ from $C_i$. This implies that $F_{\nu_i}(x) = F_{\exp_{\mu_i}(h \cdot \zeta_i^0)}(x) = 0$ for $x \leqslant \min C_i - h R$, and $\mu(C_i)$ for $x \geqslant \max C_i + h R$. Hence, evaluating \Cref{removebary:sumFi} at $x \in C_i$ that is at distance at least $h R$ from the boundary $\partial C_i$, there holds $F_{\nu_i}(x) = F_{\exp_{\mu_i}(h \cdot \zeta_i^0)}(x)$. Using the expression of the monotone optimal plan, we deduce that the optimal transport plan $\alpha_i$ between $\nu_i$ and $\exp_{\mu_i}(h \cdot \zeta_i^0)$ is concentrated on pairs $(x,y)$ such that if $x \in [\min C_i + h R, \max C_i - h R]$, then $y = x$. In addition, $|y-x| \leqslant 2 h R$, since mass only moves within balls of radius $h R$ around $\min C_i$ and $\max C_i$. It follows that
	\begin{align*}
		W^2(\nu_i, \exp_{\mu_i}(h \cdot \zeta_i^0))
		&= \int_{(x,y)} |y-x|^2 d\alpha_i
		= \int_{(x,y),|x - \min C_i| \leqslant h R \text{ or } |x - \max C_i| \leqslant h R} |y-x|^2 d\alpha_i \\
		&\leqslant (2 h R)^2 \mu\left(\overline{\mathscr{B}}(\min C_i, h R) \cup \overline{\mathscr{B}}(\max C_i, h R)\right).
	\end{align*}
	Since $\mu(\partial C_i) = 0$, the above quantity is of order $h^2 O(h)$. Now, by the second triangular inequality, 
	\begin{align*}
		W^2(\mu,\exp_{\mu}(h \cdot \zeta^0))
		&= \sum_{i=1}^N W^2(\mu_i, \nu_i) 
		\geqslant \sum_{i=1}^N W^2(\mu_i, \exp_{\mu_i}(h \cdot \zeta_i^0)) - W(\nu_i, \exp_{\mu_i}(h \cdot \zeta_i^0))\left(W(\nu_i, \exp_{\mu_i}(h \cdot \zeta_i^0)) + 2 W(\mu_i, \nu_i)\right) \\
		&\geqslant \sum_{i=1}^N W^2(\mu_i, \exp_{\mu_i}(h \cdot \zeta_i^0)) - W(\nu_i, \exp_{\mu_i}(h \cdot \zeta_i^0)) 3 h R \mu(C_i).
	\end{align*}
	Combining the two last lines provides the desired estimate in \Cref{removebary:firstestimate}, and we conclude.
\end{proof}

\subsection{Reduction to the unit symmetric measure field}

We now prove that whenever \emph{some} centred measure field $\xi \notin \Tan_{\mu}$ satisfies \Cref{limitsupcondition}, then $\mu$ admits a nonatomic submeasure on which the unit symmetric measure field also satisfies \Cref{limitsupcondition}. The argument uses various operations to modify a measure field and still satisfy \Cref{limitsupcondition}; for readability, we introduce the notation
\begin{align*}
	\Xi[\mu] \coloneqq \left\{ \zeta \in \Psp_2(\T\Sp)_{\mu} \st \bary{\zeta}{} = 0 \ \ \text{ and } \ \ \limsup_{h \searrow 0} \frac{W(\mu,\exp_{\mu}(h \cdot \zeta))}{h} = \|\zeta\|_{\mu} \right\}.
\end{align*}
If $\mu$ is a nonnegative Borel measure with arbitrary finite mass $m > 0$, we still denote $\Xi[\mu] \coloneqq m \Xi[\mu/m]$.

\begin{lem}[Technical manipulations]\label{res:technicalmanip}
	Let $\mu \in \Psp_2(\Sp)$. Then:
	\begin{itemize}
	\item (closure) the set $\Xi[\mu]$ is closed with respect to $W_{\mu}$.
	\item (nonnegative cone) $\zeta \in \Xi[\mu]$ implies that $(\pi_x, \lambda \pi_v)_{\#} \zeta \in \Xi[\mu]$ for any $\lambda \geqslant 0$.
	\item (stability by restriction) For any measurable $A$ with $\mu(A) > 0$, $\zeta \in \Xi[\mu]$ implies that $\zeta \resmes \{(x,v) \, |\, x \in A\} \in \Xi[\mu \resmes A]$.
	\item (stability by truncation) If $f \in L^2_{\mu}(\Sp;\mathbb{R}^+)$ is such that $\frac{1}{2}\left[(id,-f)_{\#} \mu + (id,f)_{\#} \mu\right] \in \Xi[\mu]$, then for any $R > 0$, there holds $\frac{1}{2}\left[(id,-\min(f,R))_{\#} \mu + (id,\min(f,R))_{\#} \mu\right] \in \Xi[\mu]$.
	\end{itemize}
\end{lem}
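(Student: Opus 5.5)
The four items rest on a single mechanism. Write $L(\zeta)\coloneqq \limsup_{h\searrow 0} W(\mu,\exp_\mu(h\cdot\zeta))/h$. The upper bound $L(\zeta)\leqslant \|\zeta\|_\mu$ always holds, since $(\pi_x,\pi_x+h\pi_v)_\#\zeta$ is a transport plan. So membership in $\Xi[\mu]$ means that this trivial bound is attained. Moreover $L$ is $1$-Lipschitz for $W_\mu$, as noted at the start of the proof of \Cref{res:removebary}. Each item then reduces to showing that the defect $\|\zeta\|_\mu - L(\zeta)$ is preserved, or bounded by a quantity that vanishes.

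\emph{Closure.} Let $\zeta_k\to\zeta$ in $W_\mu$ with $\zeta_k\in\Xi[\mu]$. Then $|L(\zeta)-L(\zeta_k)|\leqslant W_\mu(\zeta,\zeta_k)$ and $|\|\zeta\|_\mu-\|\zeta_k\|_\mu|\leqslant W_\mu(\zeta,\zeta_k)$, so $L(\zeta)=\|\zeta\|_\mu$. The barycenter map is continuous from $W_\mu$ into $L^2_\mu$, since $\|b_\xi-b_\zeta\|\leqslant W_\mu(\xi,\zeta)$ by Jensen; hence the condition $\bary{\zeta}{}=0$ also passes to the limit.

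\emph{Nonnegative cone.} Observe that $\exp_\mu(h\cdot(\lambda\cdot\zeta))=\exp_\mu((\lambda h)\cdot\zeta)$. Reparametrising $h'=\lambda h$ gives $L(\lambda\cdot\zeta)=\lambda L(\zeta)$, and clearly $\|\lambda\cdot\zeta\|_\mu=\lambda\|\zeta\|_\mu$. The case $\lambda=0$ is trivial, and the centring condition is preserved.

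\emph{Restriction.} Split $\mu=\mu\resmes A+\mu\resmes A^c$ and split $\zeta$ accordingly into $\zeta_A$ and $\zeta_{A^c}$. By convexity of $W^2$ (superposing plans, with the mass convention of \Cref{sec:prelim}),
\[
W^2(\mu,\exp_\mu(h\cdot\zeta))\leqslant W^2(\mu\resmes A,\exp(h\cdot\zeta_A))+W^2(\mu\resmes A^c,\exp(h\cdot\zeta_{A^c})).
\]
Bound the second term by $h^2\|\zeta_{A^c}\|^2$. Dividing by $h^2$ and taking the limsup along a sequence realising $L(\zeta)^2=\|\zeta_A\|^2+\|\zeta_{A^c}\|^2$, we get $\limsup W^2(\mu\resmes A,\exp(h\cdot\zeta_A))/h^2\geqslant\|\zeta_A\|^2$. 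The reverse inequality is trivial. The barycenter of the restriction is still zero, since barycenters are computed fibrewise.

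\emph{Truncation.} This is the step I expect to be the main obstacle, because truncation is not a restriction in $x$ alone. Let $\xi_f$ denote the symmetric field built from $f$, and set $g\coloneqq f-\min(f,R)\geqslant 0$. The idea is to write $\xi_f$ as a superposition in which the truncated part and the excess are coupled additively: for each $x$, the velocity $\pm f(x)$ equals $\pm\min(f,R)(x)\pm g(x)$ with the same sign. The displacement $h\xi_f$ therefore decomposes as a sum of two displacements. I would first try a sub-additivity bound coming from composing transports in 1D. Let $\nu_h\coloneqq\exp_\mu(h\cdot\xi_{\min(f,R)})$. One transports $\mu\to\nu_h$ optimally, then moves $\nu_h$ to $\exp_\mu(h\cdot\xi_f)$ by displacing each particle by $\pm h g(x)$ with the same sign. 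This yields $W(\mu,\exp(h\cdot\xi_f))\leqslant W(\mu,\nu_h)+h\|g\|_{L^2_\mu}$.

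This triangle bound alone loses information, since $\|f\|\leqslant\|\min(f,R)\|+\|g\|$ is generally a strict inequality. To fix this, I would use the pointwise identity $f^2=\min(f,R)^2+g^2+2\min(f,R)g$, and bound the cross term by the coupling that moves both pieces together. The cleanest route is to restrict first: on $\{f\leqslant R\}$ truncation does nothing, so by the restriction item $\xi_f\resmes\{f\leqslant R\}$ lies in $\Xi$. On $\{f>R\}$ the truncated field is $R$ times the unit symmetric field, which is $(R/f)\cdot\xi_f$ fibrewise. I would then show that the unit symmetric field on $\{f>R\}$ lies in $\Xi$. This follows from a fibrewise version of the cone property, obtained by partitioning $\{f>R\}$ into level sets where $f\approx c_j$, applying restriction and the cone property on each, and using the closure item to control the approximation. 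Finally, the two pieces on $\{f\leqslant R\}$ and $\{f>R\}$ must be recombined. To do this, I would first show that the restricted pieces attain $L$ along a \emph{common} sequence $h_n$, which holds because any sequence realising $L(\xi_f)$ realises it for each restriction by the argument above. Then I would reverse the splitting estimate using a Kantorovich-potential or monotone-plan argument in the spirit of \Cref{res:removebary}, with the $h^2O(h)$ boundary error handled through \Cref{res:boundR} after first truncating to bounded $f$ via the closure item.
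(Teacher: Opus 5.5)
Your arguments for closure, the cone property and restriction are correct and coincide with the paper's: continuity and homogeneity of the defining relation, and the convexity splitting over $A$ and $A^c$ with the trivial bound $h^2\int_{A^c}|v|^2d\zeta$ on the second piece.

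\textbf{The truncation item is not proved.} Your route needs a recombination principle twice: if $\zeta_j\in\Xi[\mu\resmes E_j]$ for a measurable partition $(E_j)_j$, then $\sum_j\zeta_j\in\Xi[\mu]$.
\begin{itemize}
\item You need it first to glue the level sets $E_j\subset\{f>R\}$ in your ``fibrewise cone'' step. The closure item acts only inside a fixed $\Xi[\nu]$, so it cannot replace this.
\item You need it again to glue $\{f\leqslant R\}$ with $\{f>R\}$.
\end{itemize}
Recombination requires the superadditive bound $W^2(\mu_1+\mu_2,\nu_1+\nu_2)\geqslant W^2(\mu_1,\nu_1)+W^2(\mu_2,\nu_2)-o(h^2)$. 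This is the reverse of the convexity inequality, and nothing in your sketch yields it.

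At a fixed $h$ this bound fails once the two supports are interleaved at scale $h$. Take unit masses at $x$ and $x+d$ with symmetric velocities $\pm a$ and $\pm b$, where $a<b$ and $0<d<h(b-a)$. The monotone plan between $\delta_x+\delta_{x+d}$ and its displaced image costs $h^2(a^2+b^2)+d^2-hd(a+b)$. The two pieces taken separately cost $h^2(a^2+b^2)$. The relative loss is therefore of order one when $d\sim h$.

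In \Cref{res:removebary} this cross-interaction is controlled only because the pieces are finitely many intervals with $\mu$-null boundaries and the velocities are bounded. Pieces then interact only within $hR$ of finitely many points. Sets such as $\{f\leqslant R\}$ or the level sets of $f$ can be interleaved at every scale, and then no such localisation exists.

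Your common-sequence remark does not rescue this. After applying the cone property with factor $R/c_j$ on $E_j$, the only sequence along which you know that piece attains its limsup is $c_jh_n/R$. This depends on $j$, which is exactly the subsequence dependence the paper flags in the remark after \Cref{res:reduc}.

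\textbf{The missing idea} is a lower bound valid at each fixed $h$, which changes the velocity on a set without ever splitting $\mu$. Because the field $\gamma$ is symmetric, the coupling $\alpha\in\Gamma_\mu(\gamma,\gamma)$ that sends $v$ to $-v$ on a set $A$ and $v$ to $v$ on $A^c$ is admissible. Set $\gamma_t\coloneqq(\pi_x,(1-t)\pi_v+t\pi_w)_{\#}\alpha$; this multiplies the velocity by $|1-2t|$ on $A$ and leaves it unchanged elsewhere. Apply displacement semiconcavity \cite[Theorem~7.3.2]{ambrosioGradientFlows2005} with both endpoints equal to $\exp_\mu(h\cdot\gamma)$ and the plan $(\pi_x+h\pi_v,\pi_x+h\pi_w)_{\#}\alpha$. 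This gives, for every $h>0$,
\[
W^2(\mu,\exp_\mu(h\cdot\gamma_t))\geqslant W^2(\mu,\exp_\mu(h\cdot\gamma))-t(1-t)h^2\int|v-w|^2\,d\alpha .
\]
The identity $(1-t)|v|^2+t|w|^2-t(1-t)|v-w|^2=|(1-t)v+tw|^2$ then gives $\gamma_t\in\Xi[\mu]$, along the same sequence as $\gamma$. This is precisely the fibrewise cone property you wanted, with no restriction or recombination. Iterating it on the sets where the current velocity exceeds $R$, and concluding with the closure item, yields $\min(f,R)$.
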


\begin{proof}
	The set of measure fields with 0 barycenter is stable by all the operations involved. 
	The first and second point are direct, since the defining inequality is continuous with respect to $W_{\mu}$ and positively homogeneous. To show the third point, denote $\zeta_A \coloneqq \zeta \resmes \{(x,v) \, |\, x \in A\}$. Then, using the convexity of the squared Wasserstein distance in the Banach space of measures \cite[Theorem~4.8]{villaniOptimalTransport2009},
	\begin{align*}
		\int_{(x,v), x \in A} |v|^2 d\zeta + \int_{(x,v), x \in A^c} |v|^2 d\zeta
		&= \|\zeta\|_{\mu}^2 
		\leqslant \limsup_{h \searrow 0} \frac{W^2(\mu, (\pi_x + h \pi_v)_{\#} \zeta)}{h^2} \\
		&\leqslant \limsup_{h \searrow 0} \frac{W^2(\mu \resmes A, (\pi_x + h \pi_v)_{\#} \zeta_A) + W^2(\mu \resmes A^c, (\pi_x + h \pi_v)_{\#} \zeta_{A^c})}{h^2} \\
		&\leqslant \limsup_{h \searrow 0} \frac{W^2(\mu \resmes A, (\pi_x + h \pi_v)_{\#} \zeta_A)}{h^2} + \int_{(x,v), x \in A^c} |v|^2 d\zeta.
	\end{align*}
	As converse inequality always holds, we deduce that $\zeta_A \in \Xi[\mu \resmes A]$. 
	
	The last point is reached by a limit procedure. Let $\varepsilon > 0$. Define a sequence $(f_n)_{n \in \mathbb{N}} \subset L^2_{\mu}$ by $f_0 \coloneqq f$, and the following construction. For each $n$, let $A \coloneqq \{f_n \geqslant R + \varepsilon\}$, and denote $\gamma_n \coloneqq \frac{1}{2} \left[(id,-f_n)_{\#} \mu + (id,f_n)_{\#} \mu\right]$. We construct a transport plan $\alpha \in \Gamma_{\mu}(\gamma_n, \gamma_n)$ that sends $v$ to $-v$ on $A$, and $v$ to $v$ on $A^c$, i.e.
	\begin{align*}
		\alpha \coloneqq \frac{1}{2} \left(id, f_n, f_n \ind_{A^c} - f_n \ind_A \right)_{\#} \mu + \frac{1}{2} \left(id, -f_n, f_n \ind_{A} - f_n \ind_{A^c} \right)_{\#} \mu.
	\end{align*}
	Let $t \coloneqq R/(R+\varepsilon) \in (0,1)$, and $\gamma_{n+1} \coloneqq (\pi_x, (1-t) \pi_v + t \pi_w)_{\#} \alpha$. 
	By the displacement semiconcavity of the squared Wasserstein distance \cite[Theorem 7.3.2]{ambrosioGradientFlows2005}, there holds
	\begin{align*}
		W^2\left(\mu, \exp_{\mu}(h \cdot \gamma_{n+1})\right)
		\geqslant (1-t) W^2(\mu,\exp_{\mu}(h \cdot \gamma_n)) + t W^2(\mu,\exp_{\mu}(h \cdot \gamma_n)) - t (1-t) h^2 \int_{(x,v,w)} |v - w|^2 d\alpha.
	\end{align*}
	(Precisely, we took $\mu^1 = \mu^2 = \exp_{\mu}(h \cdot \gamma_n)$ and $\mu^3 = \mu$, along with the plan $\boldsymbol{\mu}^{1\,2} = (\pi_x + h \pi_v, \pi_x + h \pi_w)_{\#} \alpha$. Then, still using the notation of \cite{ambrosioGradientFlows2005}, $W_{\boldsymbol{\mu}^{1\,2}}^2(\mu^1,\mu^2) = \int |(x + h v) - (x + h w)|^2 d\alpha = h^2 \int |v-w|^2 d\alpha$.) Dividing by $h^2 > 0$ and taking the limit sup in $h \searrow 0$, we get that
	\begin{align*}
		\limsup_{h \searrow 0} \frac{W^2\left(\mu, \exp_{\mu}(h \cdot \gamma_{n+1})\right)}{h^2}
		\geqslant \|\gamma_n\|_{\mu}^2 - t (1-t) \int_{(x,v,w)} |v - w|^2 d\alpha
		= \int_{(x,v,w)} |(1-t) v + t w|^2 d\alpha
		= \|\gamma_{n+1}\|_{\mu}^2.
	\end{align*}
	Here we used that $(1-t) |v|^2 + t |w|^2 - t (1-t) |v - w|^2 = |(1-t) v + t w|^2$ for any vectors $v,w$. Therefore, the measure field $\gamma_{n+1}$ belongs to $\Xi[\mu]$. As $n$ goes to $\infty$, the map $f_n$ converges in $\mu-$measure towards some function that coincides with $f$ whenever $f \leqslant R$, and lies in $[R,R+\varepsilon]$ otherwise. Taking $\varepsilon > 0$ small enough, then $n$ large enough, we can approximate $\min(f,R)$ in $L^2_{\mu}$ with arbitrarily small error. Since convergence of maps in $L^2_{\mu}$ implies convergence of their associated symmetric measure field in $W_{\mu}$, and since $\Xi[\mu]$ is $W_{\mu}-$closed by the first point, we conclude. 
\end{proof}

Lastly, we turn to the main step of the reduction. The strategy is to first restrict to a nonatomic submeasure of $\mu$ on which $\zeta$ splits mass; then to show that taking averages of the ``positive'' and ``negative'' parts does not break the limit sup condition~\Cref{limitsupcondition}; then to normalize the obtained measure field.

\begin{lem}\label{res:reduc}
	Assume that $\zeta \in \Xi[\mu]$ for some $\zeta \notin \Tan_{\mu}$. Then there exists a nonatomic measure $\sigma \leqslant \mu$ of positive mass such that $\xi \in \Xi[\sigma]$ for the symmetric unit measure field $\xi \coloneqq \frac{1}{2} \left[(id,-1)_{\#} \sigma + (id,1)_{\#} \sigma\right]$.
\end{lem}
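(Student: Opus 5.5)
We follow the three-stage strategy announced just before the statement: restrict to a nonatomic piece where $\zeta$ genuinely splits mass, symmetrize and average the velocities, then normalize.

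\textbf{Step 1: locating a nonatomic piece where $\zeta$ splits mass.} Disintegrate $\zeta = \int \zeta_x \, d\mu(x)$. Decompose $\mu = \mu^a + \mu^d$ into its atomic and diffuse parts. We first argue that $\zeta \notin \Tan_{\mu}$ forces $\zeta_x$ not to be a Dirac mass on a set of positive $\mu^d$-measure. On atoms, any splitting of mass is allowed in $\Tan_\mu$. Where $\mu$ is diffuse and $\zeta$ is map-induced, a centred field must vanish. The expected route is the known description of $\Tan_\mu$ in dimension one from \cite{gigliGeometrySpaceProbability2008,aussedatStructureGeometricTangent2025}: a field belongs to $\Tan_{\mu}$ as soon as its diffuse part is induced by a map. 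Applying this to the centred $\zeta$ gives a Borel set $B$ with $\mu^d(B)>0$ and $\mu(B\cap\{\text{atoms}\})=0$ on which $\zeta_x \neq \delta_0$. Then the stability by restriction in \Cref{res:technicalmanip} yields $\zeta_B \coloneqq \zeta\resmes\{x\in B\} \in \Xi[\mu\resmes B]$, and $\mu \resmes B$ is nonatomic.

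\textbf{Step 2: symmetrization.} Since each $\zeta_x$ is centred but not $\delta_0$, we will replace it by $\frac12[\delta_{-f(x)}+\delta_{f(x)}]$, where $f(x)$ is a suitable average of $|v|$. The tool is the displacement semiconcavity argument used for the truncation point of \Cref{res:technicalmanip}. We couple $\zeta_B$ with itself through a plan $\alpha\in\Gamma_{\mu}(\zeta_B,\zeta_B)$ that rearranges velocities within each fibre (for instance the reflection $v\mapsto -v$ composed with suitable fibrewise permutations), and interpolate with $t=1/2$. The inequality
\[
\limsup \frac{W^2(\mu,\exp(h\cdot\gamma_{t}))}{h^2}\geq \|\zeta\|^2 - t(1-t)\int|v-w|^2d\alpha = \|\gamma_t\|^2
\]
shows that $\Xi$ is stable under such fibrewise averagings. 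Iterating averaging operations of this kind, and passing to the limit using closure, should drive $\zeta_x$ towards a symmetric two-point field $\frac12[\delta_{-f(x)}+\delta_{f(x)}]$ with $f>0$ on $B$. To keep this step manageable, we first reduce to bounded, finitely-valued fibres by approximation, using continuity in $W_\mu$. Averaging the mass of $\zeta_x$ on $v>0$ and on $v<0$ does not simply produce two-point fields directly. This is the \textbf{main obstacle}: one must pick couplings $\alpha$ whose interpolations converge to the desired symmetric field, and control that $f$ stays bounded away from $0$ on a positive-measure set. The first thing to try is to couple positive velocities among themselves and negative ones among themselves, via the monotone rearrangement inside each half of the fibre; repeated midpoint interpolation then contracts each half to its mean.

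\textbf{Step 3: normalization.} Once $\frac12[(id,-f)_\#\nu+(id,f)_\#\nu]\in\Xi[\nu]$ with $\nu=\mu\resmes B$, apply the truncation point of \Cref{res:technicalmanip} with a small level $R>0$ such that $\sigma_0 \coloneqq \nu\resmes\{f\ge R\}$ has positive mass. Restricting to $\{f\geq R\}$ by the restriction point gives the constant field $\pm R$ on $\sigma_0$, and the nonnegative cone point rescales it to $\pm1$. Setting $\sigma\coloneqq\sigma_0$, which is nonatomic and satisfies $\sigma \leq \mu$, concludes the proof.
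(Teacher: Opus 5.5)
Your Steps~1 and~3 are sound and match the paper. Step~1 uses the one-dimensional description of $\Tan_\mu$ to isolate a set carrying only diffuse mass on which $\zeta_x\neq\delta_0$; Step~3 is truncation, restriction and rescaling. The gap is Step~2, which is the heart of the lemma and which you leave open. Moreover, the route you suggest for it does not work, for two reasons.

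\emph{The monotone self-coupling does nothing.} The monotone coupling of a measure with itself is the identity coupling $(id,id)_{\#}\rho$, so midpoint interpolation along it returns $\rho$ unchanged.

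\emph{Splitting by sign gives the wrong shape.} Even with a genuinely contracting self-coupling, splitting $\zeta_x$ by the sign of $v$ gives pieces of masses $p_-,p_0,p_+$ that are in general unequal. The limit would then be $p_-\delta_{m_-}+p_0\delta_0+p_+\delta_{m_+}$ rather than $\frac12[\delta_{-f(x)}+\delta_{f(x)}]$. Your Step~3 relies on the truncation point of \Cref{res:technicalmanip}, which is stated only for symmetric two-point fields, so it cannot be run on such a field.

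The missing idea is to cut each fibre at its median into two halves of mass exactly $\frac12$. The paper does this with an optimal plan $\alpha\in\Gamma_{\nu,o}(\zeta_A,\xi)$ towards the unit symmetric field, writing $\zeta_A=\frac12[\zeta_A^-+\zeta_A^+]$. Centredness forces the barycenters of the halves to satisfy $f^-=-f^+$. Moreover $f^+(x)>0$ unless $\zeta_x=\delta_0$, since the upper half lies above the median and the lower half below it. Each half is then collapsed to its barycenter in one step by duality: since $y\mapsto\varphi^c(y)-|y-z|^2$ is concave, Jensen's inequality on each half gives
\begin{align*}
	W^2(\nu,\exp_{\nu}(h\cdot\zeta_A)) \leqslant W^2(\nu,\exp_{\nu}(h\cdot\gamma)) + \frac{h^2}{2}\sum_{s\in\{-,+\}}\int_{(x,v)}|v-f^s(x)|^2\,d\zeta_A^s .
\end{align*}
The last term equals $h^2(\|\zeta_A\|_\nu^2-\|\gamma\|_\nu^2)$, hence $\gamma\in\Xi[\nu]$.

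Your semiconcavity mechanism can be repaired along the same lines. Write $\zeta_x=\frac12[\rho_x^-+\rho_x^+]$ with probability measures $\rho_x^\pm$ on either side of a median. The self-coupling $\frac12[\rho_x^-\otimes\rho_x^-+\rho_x^+\otimes\rho_x^+]$ lies in $\Gamma(\zeta_x,\zeta_x)$. Midpoint interpolation along it keeps each half on its side of the median with the same barycenter, while halving its variance. Iterating, the fields converge in $W_\mu$ to $\gamma$, and the closure of $\Xi$ concludes. This would be a legitimate alternative to the paper's duality argument, trading a one-shot Jensen estimate for an iteration plus closure.

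Finally, your preliminary ``reduction to bounded, finitely-valued fibres by approximation'' is not justified as stated. $W_\mu$-approximants of an element of $\Xi$ are in general not in $\Xi$, so this would require tracking the defect $\|\cdot\|_\mu^2-\limsup_{h}W^2/h^2$ quantitatively. With the product coupling above, the reduction is not needed.
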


\begin{proof}
	\newcounter{step}
	We modify $\zeta$ in successive steps to obtain a symmetric unit measure field, and show that each steps preserves the corresponding set $\Xi$.
	
	\paragraph{Step~\thestep: restriction to a suitable set} \stepcounter{step}
	
	Decompose $\mu = \mu^a + \mu^d$, with $\mu^a$ purely atomic and $\mu^d$ atomless.
	By \cite[Theorem~3.7]{sedjroCharacterizationTangentSpace2014} (see also \cite[Proposition~2.9]{aussedatStructureGeometricTangent2025}), the tangent cone to $\mu$ is made of measure fields $\gamma$ of the form $\gamma^a + (id,f)_{\#} \mu^{d}$, where $\pi_{x \#} \gamma^a = \mu^a$ is a measure field on $\T\Sp$ with first marginal (hence same mass as) the atomic part $\mu^a$, and $f \in L^2_{\mu^d}(\Sp;\mathbb{R})$. Hence $\zeta \notin \Tan_{\mu}$ if and only if $\zeta = \zeta^a + \zeta^d$, with $\pi_{x \#} \zeta^a = \mu^a$, $\pi_{x \#} \zeta^d = \mu^d$ and $\zeta^d$ not induced by a map. In consequence, there exists a measurable set $A \subset \mathbb{R}$ such that 
	\begin{align*}
		\mu^a(A) = 0, \qquad
		\mu^d(A) > 0, \qquad
		\text{and for } \mu-\text{a.e. } x \in A, \quad
		\int_{v} |v|^2 d\zeta_x > 0.
	\end{align*}
	By \Cref{res:technicalmanip}, the restricted measure field $\zeta_A \coloneqq \zeta \resmes \{(x,v) \,|\, x \in A\}$ belongs to $\Xi[\mu \resmes A]$. For the sequel, denote $\nu \coloneqq \mu \resmes A$.
	
	\paragraph{Step~\thestep: reduction to a symmetric measure field} \stepcounter{step}
	
	Now, split $\zeta_A$ in a ``left'' and ``right'' part as follows: consider $\alpha = \alpha(dx,dv,dw) \in \Gamma_{\nu,o}(\zeta_A,\xi)$, with again $\xi \coloneqq \frac{1}{2} \left[(id,-1)_{\#} \nu + (id,1)_{\#} \nu\right]$ the symmetric unit field. Then $\alpha = \frac{1}{2} \left[\alpha^- + \alpha^+\right]$, where $(\pi_x,\pi_w)_{\#} \alpha^{\pm} = (id,\pm 1)_{\#} \nu$. Using the other marginal condition, we can write $\zeta_A = (\pi_x, \pi_v)_{\#} \alpha = \frac{1}{2} (\pi_x,\pi_v)_{\#} \alpha^- + \frac{1}{2} (\pi_x,\pi_v)_{\#} \alpha^+ \eqqcolon \frac{1}{2} \left[\zeta_A^- + \zeta_A^+\right]$. 
	
	Denote $f^{\pm} \in L^2_{\nu}$ the respective barycenters of $\zeta_A^{\pm}$. Then $\frac{1}{2} \left[f^- + f^+\right]$ is the barycenter of $\zeta_A$, i.e. $0$, hence $f^- = - f^+$. We show that $f^+(x) > 0$ for $\nu-$a.e. $x \in \Sp$. By construction, $\alpha = (\alpha_x)_x \otimes \nu$, with $\alpha_x$ identified with an optimal plan between $(\zeta_A)_x$ and $\xi_x = \frac{1}{2} \left[\delta_{-1} + \delta_1\right]$. Hence
	\begin{align*}
		\frac{1}{2} \left[\int_v |v|^2 d\zeta_A + \int_w |w|^2 d\xi_x - W^2((\zeta_A)_x, \xi_x)\right]
		&= \int_{(v,w)} \frac{1}{2} \left[|v|^2 + |w|^2 - |v - w|^2\right] d\alpha_x
		= \int_{(v,w)} \left<v,w\right> d\alpha_x \\
		&= \frac{1}{2} \int_{(v,w)} \left<v,-1\right> d\alpha_x^{-} + \frac{1}{2} \int_{(v,w)} \left<v,1\right> d\alpha_x^+
		= f^+(x).
	\end{align*}
	By \cite[Lemma 2.9]{aussedatLocalityCentredTangent2025}, the leftmost term is always nonnegative, and 0 if and only if $(\zeta_A)_x = \delta_0$, which may happen only on a $\nu-$negligible set by assumption. Hence $f^+(x) > 0$ for $\nu-$a.e. $x \in \Sp$. 
	
	We now replace $\zeta_A$ by $\gamma \coloneqq \frac{1}{2} \left[(id,f^-)_{\#} \nu + (id,f^+)_{\#} \nu\right]$ as follows: on the one hand,
	\begin{align}\label{reduc:twosided:norm}
		\int_{x \in \Sp} |v|^2 d\zeta_A 
		&= \frac{1}{2} \left[\int_{x \in \Sp} |f^-(x)|^2 d\nu + \int_{x \in \Sp} |f^+(x)|^2 d\nu\right] + \frac{1}{2} \left[\int_{(x,v)} |v - f^-(x)|^2 d\zeta_A^- + \int_{(x,v)} |v - f^+(x)|^2 d\zeta_A^+\right] \notag \\
		&= \|\gamma\|_{\nu}^2 + \frac{1}{2} \sum_{s \in \{-,+\}} \int_{(x,v)} |v - f^s(x)|^2 d\zeta_A^s.
	\end{align}
	On the other hand, using the dual reformulation, there holds
	\begin{align}\label{reduc:twosided:dual}
		W^2(\nu,\exp_{\nu}(h \cdot \zeta_A))
		= \sup_{\varphi \in L^1_{\mu}} \int_{(x,v) \in \T\Sp} \varphi^c(x + h v) d \zeta_A - \int_{x \in \Sp} \varphi(x) d\mu.
	\end{align}
	For each fixed $\varphi$ and $z \in \Sp$, the function $y \mapsto \varphi^c(y) - |y - z|^2 = \inf_{w \in \Sp} \varphi(w) + |w - y|^2 - |y - z|^2$ is concave. Hence, taking $z = x + h f^{\pm}(x)$ inside the integral, we can write that
	\begin{align*}
		&\int_{(x,v) \in \T\Sp} \varphi^c(x + h v) d \zeta_A \\
		&= \frac{1}{2} \sum_{s \in \{-,+\}} \int_{(x,v) \in \T\Sp} \left[\varphi^c(x + h v) - |(x + h v) - (x + h f^{s}(x))|^2\right] d\zeta_A^s + \frac{1}{2} \sum_{s \in \{-,+\}} \int_{(x,v)} |(x + h v) - (x + h f^{s}(x))|^2 d\zeta_A^{s} \\
		&\leqslant \frac{1}{2} \sum_{s \in \{-,+\}} \int_{x \in \Sp} \left[\varphi^c(x + h f^s(x)) - 0\right] d\nu + \frac{h^2}{2} \sum_{s \in \{-,+\}} \int_{(x,v)} |v - f^{s}(x)|^2 d\zeta_A^{s}.
	\end{align*}
	In the first term of the last line, we recognize $\int_{y \in \Sp} \varphi^c(y) d\exp_{\nu}(h \cdot \gamma)$. Plugging this inequality in~\Cref{reduc:twosided:dual}, we obtain
	\begin{align*}
		W^2(\nu,\exp_{\nu}(h \cdot \zeta_A))
		&\leqslant \sup_{\varphi \in L^1_{\mu}} \int_{y \in \Sp} \varphi^c(y) d\exp_{\nu}(h \cdot \gamma) - \int_{x \in \Sp} \varphi(x) d\mu + \frac{h^2}{2} \sum_{s \in \{-,+\}} \int_{(x,v)} |v - f^{s}(x)|^2 d\zeta_A^{s} \\
		&= W^2(\nu,\exp_{\nu}(h \cdot \gamma)) + \frac{h^2}{2} \sum_{s \in \{-,+\}} \int_{(x,v)} |v - f^{s}(x)|^2 d\zeta_A^{s}.
	\end{align*}
	Dividing by $h > 0$ and taking the limit sup in $h \searrow 0$, we get from \Cref{reduc:twosided:norm} that $\gamma \in \Xi[\nu]$.
	
	\paragraph{Step~\thestep: reduction to a unit symmetric measure field} \stepcounter{step}
	
	At this point, we know that there exists $f \coloneqq f^+ \in L^2_{\nu}(\Sp;\mathbb{R}^+)$ such that $f(x) > 0$ for $\nu-$a.e. point, and the measure field $\gamma \coloneqq \frac{1}{2} \left[(id,-f)_{\#} \nu + (id,f)_{\#} \nu\right]$ belongs to $\Xi[\nu]$. By \Cref{res:technicalmanip}, we might multiply $f$ by some large $\lambda > 0$ so that $\nu \left\{ \lambda f \geqslant 1 \right\} \geqslant \nu(\Sp) / 2$, and further restrict to the measurable set $B \coloneqq \{ \lambda f \geqslant 1\} \subset \mathbb{R}$, while still maintaining $\vartheta \in \Xi[\sigma]$ for $\sigma \coloneqq \nu \resmes B$ and $\vartheta \coloneqq \frac{1}{2} \left[(id,-\lambda f)_{\#} \sigma + (id,\lambda f)_{\#} \sigma\right]$. Since $\min(\lambda f,1) = 1$ for $\sigma-$almost every point, we can apply the last point of \Cref{res:technicalmanip} to obtain that the unit symmetric measure field does belong to $\Xi[\sigma]$. As $\sigma = \mu \resmes (A \cap B) \leqslant \mu$, this completes the proof. 
\end{proof}

\begin{rem}
	If we were interested into measure fields for which the \emph{limit} exists, instead of the limit sup, then a shorter proof could be done: one shows that the set $\Xi$ of centred $\xi \in \Psp_2(\T\Sp)_{\mu}$ such that $\lim_{h \searrow 0} W(\mu,\exp_{\mu}(h \cdot \xi))/h = \|\xi\|_{\mu}$ is a $W_{\mu}-$closed convex cone of centred measure fields, in the sense given to it in \cite{aussedatLocalityCentredTangent2025}. By Proposition~2.1 therein, there exists a measurable application $D : x \mapsto D(x)$, where $D(x)$ is either $\{0\}$ or $\mathbb{R}$, and such that $\Xi$ is exactly given by the set of measure fields concentrated on the graph of $D$. The assumption of \Cref{res:reduc} implies that $D(x) = \mathbb{R}$ on some set $A$ of positive $\mu^d-$mass, and one can directly take $\nu \coloneqq \mu^d \resmes A$. However, the dependence to a particular subsequence prevents this argument, and the above proof manually goes through the argument of \cite{aussedatLocalityCentredTangent2025}.
\end{rem}

\subsection{Proof of \Cref{res:answerQ1}}

One of the implications of \Cref{res:answerQ1} follows from the successive reductions of the previous sections. For the other implication, one needs to show that the mass that $\mu$ puts around a set $A \in \mathscr{A}$ does not play a significant role for small $h > 0$.

\begin{proof}
	Assume that $\zeta \notin \Tan_{\mu}$ satisfies $\limsup_{h \searrow 0} W_2(\mu,\exp_{\mu}(h \cdot \zeta))/h = \|\zeta\|_{\mu}$. By \Cref{res:removebary}, the centred measure field $\zeta^0 \coloneqq (\pi_x, \pi_v - \bary{\zeta}{}(\pi_x))_{\#} \zeta$ also does. \vphantom{$\limsup_{h \searrow 0}$}By \Cref{res:reduc}, there exists a submeasure $\sigma \leqslant \mu^d$ such that \Cref{limitsupcondition} holds, i.e. $W_2(\sigma,\sigma_h)/h \to 1$ when $h \searrow 0$. By \Cref{res:charLS1}, $\sigma$ gives mass to an element of $\mathscr{A}$, and so does $\mu^d$.
	
	Conversely, assume that $\mu^d$ puts mass on some $B \in \mathscr{A}$. Since any subset $A \subset B$ also belongs to $\mathscr{A}$, we can choose a compact $A \in \mathscr{A}$ such that $\mu^d(A) > 0$. Let 
	\begin{align*}
		\zeta \coloneqq \frac{1}{2} \left[(id,-1)_{\#} \mu \resmes A + (id,1)_{\#} \mu \resmes A\right] + (id,0)_{\#} \mu \resmes A^c.
	\end{align*}
	We claim that $\limsup_{h \searrow 0} W(\mu,\exp_{\mu}(h \cdot \zeta)) / h = \|\zeta\|_{\mu} = \sqrt{\mu(A)}$: the strategy is to show that when passing to the limit, the neighbourhood of the compact $A$ receives very few mass, and the quantity $W(\mu,\exp_{\mu}(h \cdot \zeta))$ is very close to the sum of the contribution of $\mu \resmes A$ and $\mu \resmes A^c$. 
	
	Denote by $\eta$ the optimal transport plan between $\mu$ and $\exp_{\mu}(h \cdot \zeta)$, which is concentrated on pairs $(x,y)$ such that $|y-x| \leqslant h$ by \Cref{res:boundR}. Let $\mu_A \coloneqq \mu \resmes A$ and $\eta_A \coloneqq \eta \resmes (A \times \Sp)$. Then, by restriction of optimality \cite[Theorem~4.6]{villaniOptimalTransport2009} and the second triangular inequality,
	\begin{align}\label{fromAtolim:secondtri}
		W^2(\mu,\exp_{\mu}(h \cdot \zeta))
		&= \int_{(x,y)} |y-x|^2 d\eta
		\geqslant \int_{(x,y), x \in A} |y-x|^2 d\eta
		= W^2(\mu_A, \pi_{y \#} \eta_A) \notag \\
		&\geqslant W^2(\mu_A, (\mu_A)_h) - W(\pi_{y \#} \eta_A, (\mu_A)_h) \left(2 W(\mu_A, \pi_{y \#} \eta_A) + W(\pi_{y \#} \eta_A, (\mu_A)_h)\right) \notag \\
		&\geqslant W^2(\mu_A, (\mu_A)_h) - W(\pi_{y \#} \eta_A, (\mu_A)_h) \times \left(2 h + (h + h)\right).
	\end{align}
	Let $(s_n)_{n \in \mathbb{N}},(\tau_n)_{n \in \mathbb{N}} \subset (0,1)$ be the vanishing sequences associated to $A$, i.e. such that for any $x \in A$ and $n \in \mathbb{N}$, the intersection $A \cap \mathscr{B}(x,s_n)$ is contained in $\overline{\mathscr{B}}(x, \tau_n s_n)$. Since $A$ is compact, we can pick successively a finite set of points $\{b_1,\cdots,b_K\} \subset A$ such that $A \subset \bigcup_{k=1}^K \mathscr{B}(b_k,s_n)$, and for any $k \neq \ell$, there holds $b_k \notin \mathscr{B}(b_{\ell},s_n)$. Then, the closed balls $B_k \coloneqq \overline{\mathscr{B}}(b_k,\tau_n s_n)$ cover $A$, and are separated from each other by at least $s_n - \tau_n s_n$. 
	
	Consider $h_n \coloneqq \frac{s_n - \tau_n s_n - \varepsilon_n}{2}$, where $\varepsilon_n > 0$ and $\varepsilon_n/h_n \to_n 0$. For this choice, the inflated balls $B_k^{h_n} \coloneqq \overline{\mathscr{B}}(b_k, \tau_n s_n + h_n)$ are disjoint. We distinguish the contributions of each $B_k^{h_n}$ by using the convexity of the squared Wasserstein distance in the Banach sense; denoting $\mu_{A,k} \coloneqq \mu \resmes (A \cap B_k)$, and $\eta_{A,k} \coloneqq \eta \resmes ((A \cap B_k) \times \Sp)$, there holds
	\begin{align*}
		W^2(\pi_{y \#} \eta_A, (\mu_A)_{h_n})
		\leqslant \sum_{k=1}^K W^2(\pi_{y \#} \eta_{A,k}, (\mu_{A,k})_{h_n}).
	\end{align*}
	Now, both measures $\pi_{y \#} \eta_{A,k}$ and $(\mu_{A,k})_{h_n}$ are submeasures of $\exp_{\mu}(h_n \cdot \zeta) \resmes B_k^{h_n}$. Indeed, $\pi_{y \#} \eta = \exp_{\mu}(h_n \cdot \zeta)$, and the $\eta-$a.e. estimate on $|y-x|$ shows that $\pi_{y \#} \eta_{A,k}$ is supported on $B_k^{h_n}$. On the other hand, $\zeta$ coincides with the unit symmetric measure field on $A$. To apply \Cref{res:smallestim}, we only need to estimate the difference between $\pi_{y \#} \eta_{A,k}(\Sp) = (\mu_{A,k})_{h_n}(\Sp) = \mu(A \cap B_k)$ and the mass that $\exp_{\mu}(h_n \cdot \zeta)$ puts on $B_k^{h_n}$. But
	\begin{align*}
		\int_{(x,v)} \ind_{B_k^{h_n}}(x + h_n v) d\zeta 
		&= \int_{(x,v), x \in A} \frac{1}{2} \left[\ind_{B_k^{h_n}}(x - h_n) + \ind_{B_k^{h_n}}(x + h_n)\right] d\mu + \int_{x \in A^c} \ind_{B_k^{h_n}}(x) d\mu \\
		&= \frac{1}{2} \left[\mu(A \cap (B_k^{h_n} + h_n)) + \mu(A \cap (B_k^{h_n} - h_n))\right] + \mu\left(A^c \cap B_k^{h_n}\right) \\
		&= \mu(A \cap B_k) + \frac{1}{2} \mu\left(A \cap (B_k^{2 h_n} \setminus B_k)\right) + \mu\left(A^c \cap B_k^{h_n}\right).
	\end{align*}
	Since $A \subset \bigcup_{k=1}^K$, and $B_k,B_{\ell}$ are separated by at least $s_n - \tau_n s_n = 2 h_n + \varepsilon_n > 2 h_n$, the sets $A \cap (B_k^{2 h_n} \setminus B_k)$ are empty. On the other hand, $B_k^{h_n} \subset (A \cap B_k)^{\tau_n s_n + h_n}$ by definition. Therefore, the difference of masses is inferior to $\mu((A \cap B_k)^{\tau_n s_n + h_n} \setminus A)$. Summing over $k \in \llbracket1,K\rrbracket$, and applying \Cref{res:smallestim}, 
	\begin{align*}
		W^2(\pi_{y \#} \eta_A, (\mu_A)_{h_n})
		\leqslant \sum_{k=1}^K \mu((A \cap B_k)^{\tau_n s_n + h_n} \setminus A) \left(\diam B_k^{h_n}\right)^2
		\leqslant 4 (\tau_n s_n + h_n)^2 \mu(A^{\tau_n s_n + h_n} \setminus A).
	\end{align*}
	Note that $\tau_n s_n = \frac{\tau_n}{1 - \tau_n} (2 h_n + \varepsilon_n) \leqslant c_n h_n$ from some bounded sequence $(c_n)_n$. Hence, we obtain from \Cref{fromAtolim:secondtri} that 
	\begin{align*}
		W^2(\mu,\exp_{\mu}(h_n \cdot \zeta))
		\geqslant W^2(\mu_A, (\mu_A)_{h_n}) - 8 c_n h_n^2 \sqrt{\mu\left(A^{c_n h_n} \setminus A\right)}.
	\end{align*}
	As $A$ is compact, there holds $\bigcap_{n \in \mathbb{N}} A^{c_n h_n} \setminus A = \emptyset$, hence $\mu\left(A^{c_n h_n} \setminus A\right) = O(h_n)$. To conclude, we only have to show that $W^2(\mu_A, (\mu_A)_{h_n}) / h_n^2 \to_n \mu(A)$; this is almost the same computation as in \Cref{res:charLS1}, up to the choice of $h_n = \frac{s_n - \tau_n s_n - \varepsilon_n}{2}$ instead of $\frac{s_n + \tau_n s_n}{2}$, and the perturbation by $(2 \tau_n s_n + \varepsilon_n)/2 = o(h_n)$ does not affect the result. 
\end{proof}

\section{Measures for which the limit sup vanishes}\label{sec:LS0}

\subsection{Regularity of the distribution function}

The situation for measures $\mu$ such that $W_1(\mu,\mu_h) = o(h)$ is much less clear. All absolutely continuous measures have this property, and \Cref{res:limAlpha0} below exhibits some non-absolutely continuous measure that also does. Of course, measures that give mass to some $A \in \mathscr{A}$ do not. In between, we are only able to say the following. 

\begin{theorem}\label{res:charLS0}
	Let $\mu \in \Psp(\Sp)$, and denote $F_{\mu} : \mathbb{R} \to [0,1]$ the function $F_{\mu}(x) \coloneqq \mu((-\infty,x])$. It is equivalent that
	\begin{enumerate}
	\item\label{item:charLS0:lim} $W_1(\mu,\mu_h) = o(h)$;
	\item\label{item:charLS0:dom} for any vanishing sequence $(h_n)_{n \in \mathbb{N}} \subset (0,1]$, there exists a subsequence $(h_{n_k})_{k \in \mathbb{N}}$ and $g \in L^1(\Sp;\mathbb{R}^+)$ such that 
	\begin{align}\label{charLS0:dom:statement}
		\left|\frac{\frac{1}{2} \left[F_{\mu}(x-h_{n_k}) + F_{\mu}(x + h_{n_k})\right] - F_{\mu}(x)}{h_{n_k}}\right| \leqslant g(x)
		\qquad \text{for almost every } x \in \mathbb{R} \text{ and any } k \in \mathbb{N}.
	\end{align}
	\end{enumerate}
\end{theorem}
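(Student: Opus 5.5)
Since $p=1$, we work directly with the explicit formula $W_1(\mu,\nu)=\int_{\Sp}|F_\mu-F_\nu|\,dx$. Linearity of pushforward gives $F_{\mu_h}(x)=\frac12[F_\mu(x-h)+F_\mu(x+h)]$. Setting
\begin{align*}
	G_h(x)\coloneqq \frac{\frac12\left[F_\mu(x-h)+F_\mu(x+h)\right]-F_\mu(x)}{h},
\end{align*}
we get exactly $W_1(\mu,\mu_h)/h=\|G_h\|_{L^1(\Sp)}$. Condition~\cref{item:charLS0:lim} is therefore equivalent to $G_h\to 0$ in $L^1(\Sp)$ as $h\searrow 0$. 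The plan is to recognise \cref{item:charLS0:dom} as a dominated-convergence characterisation of this $L^1$ convergence, the missing ingredient being pointwise a.e.\ convergence $G_h\to 0$.

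\textbf{Pointwise convergence.} The function $F_\mu$ is monotone, so by Lebesgue's differentiation theorem it is differentiable at Lebesgue-a.e.\ $x$. At such points,
\begin{align*}
	G_h(x)=\frac12\left[\frac{F_\mu(x+h)-F_\mu(x)}{h}-\frac{F_\mu(x)-F_\mu(x-h)}{h}\right]\to \tfrac12\left[F_\mu'(x)-F_\mu'(x)\right]=0.
\end{align*}
Hence $G_h\to0$ Lebesgue-a.e., along any sequence $h\to 0$. This is the key observation: the singular part of $\mu$ cannot affect the pointwise limit. It only affects whether the convergence holds in $L^1$.

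\textbf{(2) $\Rightarrow$ (1).} Argue by contradiction. Suppose $\limsup_{h\searrow 0} W_1(\mu,\mu_h)/h>0$. Then there is a vanishing sequence $(h_n)$ along which $\|G_{h_n}\|_{L^1}\geqslant c>0$. Condition~\cref{item:charLS0:dom} provides a subsequence $(h_{n_k})$ and an integrable bound $g$. Since $G_{h_{n_k}}\to0$ a.e., dominated convergence gives $\|G_{h_{n_k}}\|_{L^1}\to0$, which is a contradiction.

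\textbf{(1) $\Rightarrow$ (2).} This is the partial converse of dominated convergence. Given a vanishing sequence $(h_n)$, we have $\|G_{h_n}\|_{L^1}\to 0$. Choose a subsequence with $\|G_{h_{n_k}}\|_{L^1}\leqslant 2^{-k}$ and set $g\coloneqq\sum_k|G_{h_{n_k}}|$. Then $g\in L^1(\Sp;\mathbb{R}^+)$ with $\|g\|_{L^1}\leqslant 2$, and $|G_{h_{n_k}}|\leqslant g$ everywhere. Each $G_h$ is Borel measurable as a combination of monotone functions, so $g$ is measurable.

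\textbf{Main obstacle.} The only step needing care is the pointwise limit, i.e.\ invoking Lebesgue's theorem for monotone functions. We must note that it gives differentiability Lebesgue-a.e. even when $\mu$ is singular, with $F_\mu'=0$ a.e.\ in that case. We must also check that the integral defining $W_1$ is against Lebesgue measure, so that "a.e." in \Cref{charLS0:dom:statement} refers to the right null sets. Beyond that, both directions are routine.
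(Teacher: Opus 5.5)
Your proposal is correct and matches the paper's proof: you rewrite $W_1(\mu,\mu_h)/h$ as the $L^1$ norm of the symmetric difference quotient of $F_\mu$, you use the $\sum_k 2^{-k}$ subsequence construction of $g$ for (1)$\Rightarrow$(2), and for (2)$\Rightarrow$(1) you combine Lebesgue-a.e. differentiability of the monotone $F_\mu$ with dominated convergence along a sequence realizing the limsup. Phrasing (2)$\Rightarrow$(1) by contradiction is only a cosmetic difference.
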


By \Cref{rem:limindepp}, \Cref{item:charLS0:lim} implies that $W_p(\mu,\mu_h) = o(h)$ for any $p \in [1,\infty)$. \Cref{item:charLS0:dom} can be understood as a regularity property in the following sense: the function $F_{\mu}$ is nondecreasing, so differentiable almost everywhere. Therefore, the left hand-side of \Cref{charLS0:dom:statement} converges to 0 for $\mathcal{L}-$almost every $x$. However, \Cref{charLS0:dom:statement} additionally imposes that the error is dominated by an $L^1$ function $g$.

\begin{proof}
	 By~\cite[Proposition~2.17]{santambrogioOptimalTransportApplied2015}, the 1-dimensional 1-Wasserstein distance is equivalently computed as
	\begin{align*}
		\frac{W_1(\mu,\mu_h)}{h}
		= \int_{x \in \Sp} \frac{1}{h} \left|F_{\mu_h}(x) - F_{\mu}(x)\right| dx
		= \int_{x \in \Sp} \frac{1}{h} \left|\frac{F_{\mu}(x-h) + F_{\mu}(x+h)}{2} - F_{\mu}(x)\right| dx.
	\end{align*}
	Assume \Cref{item:charLS0:lim} holds, and let $(h_n)_{n \in \mathbb{N}} \subset (0,1]$ be a vanishing sequence. Choose a subsequence $(n_k)_{k \in \mathbb{N}}$ such that $W_1(\mu,\mu_{h_{n_k}}) \leqslant h_{n_k} 2^{-k}$, and let $g \coloneqq \sum_{k} \left|\frac{1}{2} \left[F_{\mu}(\cdot -h_{n_k}) + F_{\mu}(\cdot +h_{n_k})\right] - F_{\mu}\right| / h_{n_k}$. Then $g \in L^1(\Sp;\mathbb{R}^+)$, and \Cref{item:charLS0:dom} holds. 
	
	Conversely, assume \Cref{item:charLS0:dom}. Since $F_{\mu}$ is nondecreasing and bounded, it has bounded variation, hence is differentiable Lebesgue-almost everywhere \cite[Corollary~3.29]{ambrosioFunctionsBoundedVariation2000}. Consequently, for $dx-$almost every $x \in \Sp$, there holds $\psi_h(x) \coloneqq \left|\frac{1}{2} \left[F_{\mu}(x-h) + F_{\mu}(x+h)\right] - F_{\mu}(x)\right|/h \to 0$ as $h \searrow 0$. Let $(h_n)_{n \in \mathbb{N}}$ be a vanishing sequence realizing the limit sup of $W_1(\mu,\mu_h)/h$ when $h$ goes to 0. By assumption, along some subsequence $(n_k)_k$, the functions $\psi_{h_{n_k}}$ are uniformly bounded by an $L^1$ function. Applying Lebesgue's dominated convergence, we conclude that $\lim_{k \to \infty} \|\psi_{h_{n_k}}\|_{L^1} = \limsup_{h \searrow 0} W_1(\mu,\mu_h)/h = 0$, which proves \Cref{item:charLS0:lim}. 	
\end{proof}


\subsection{Classification of uniform Cantor measures}

Instead of addressing the full characterization of measures $\mu$ such that $W(\mu,\mu_h) = o(h)$, we reduce to a subclass of uniform measures on Cantor sets. To avoid irrelevant complications, we only consider nonincreasing sequences $(\alpha_n)_n \subset (0,1)$, which therefore admit a limit in $[0,1]$. If $I$ is a bounded interval of $\mathbb{R}$, the notation $\alpha * I$ refers to the interval with the same center as $I$, and length equal to $\alpha$ times that of $I$.

\begin{definition}[Cantor set $C(\alpha)$]\label{def:Calpha}
	Let $\alpha = (\alpha_n)_{n \in \mathbb{N}} \subset (0,1)$ be a nonincreasing sequence. Denote $C^0 = I^0_0 \coloneqq [0,1]$. Assuming that $C^n$ is defined as the union of $2^n$ disjoint intervals $(I^n_{\ell})_{\ell = 1}^{2^n}$, each with the same length $\delta_n$, define $C^{n+1}$ by deleting the proportion $\alpha_n$ of the middle of each $I^n_{\ell}$:
	\begin{align*}
		C^{n+1} \coloneqq \bigcup_{\ell \in \llbracket1,2^n\rrbracket} I^n_{\ell} \setminus (\alpha_n * I^n_{\ell}).
	\end{align*}
	Then $C^{n+1}$ is again given by $2^{n+1}$ intervals. The set $C(\alpha)$ is defined as the intersection of all $(C^n)_{n \in \mathbb{N}}$.
\end{definition}

\begin{definition}[Uniform measure on $C(\alpha)$]\label{def:mualpha}
	Let $\alpha = (\alpha_n)_{n \in \mathbb{N}}$, and consider the notations of~\Cref{def:Calpha}. For each $n \in \mathbb{N}$, let $\mu^{n} \coloneqq \left(\mathcal{L} \resmes C^n\right) / \mathcal{L}(C^n)$. The measure $\mu_{\alpha}$ is defined as the $W-$limit of $(\mu^n)_{n \in \mathbb{N}}$. Equivalently, 
	\begin{align}\label{eqmu}
		\mu_{\alpha}(I^n_{\ell}) = 2^{-n}
		\qquad \forall n \in \mathbb{N} \text{ and } \ell \in \llbracket1,2^{n}\rrbracket.
	\end{align}
\end{definition}

For instance, the classical triadic Cantor set is obtained with the constant sequence $\alpha_n \equiv 1/3$, and $\mu_{\alpha}$ is the derivative of the Cantor staircase.
We are interested in the sequences $\alpha$ such that 
\begin{align}\label{LS0}
	\limsup_{h \searrow 0} \frac{W(\mu_{\alpha},\mu_{\alpha,h})}{h} = 0. 
\end{align}

The sets $C(\alpha)$ were studied in \cite{humkeMeasuresWhichSPorous1985}. In their terminology, a set $C$ is Tkadlec if there exists a measure concentrated on $C$ that gives mass to some porous set. They show that $C(\alpha)$ is Tkadlec if and only if $\liminf_{n \to \infty} \alpha_n > 0$. What we show in \Cref{res:limAlphaNot0,res:limAlpha0} can be reformulated as ``$\mu_{\alpha}$ satisfies \Cref{LS0} if and only if $C(\alpha)$ is non-Tkadlec''. However, a quick computation yields that $\mathcal{L}(C(\alpha)) > 0$ if and only if $\sum_{n \in \mathbb{N}} \alpha_n < \infty$. Therefore, any vanishing sequence $\alpha$ that is not summable generates a set $C(\alpha)$ of null Lebesgue measure, but such that $\mu_{\alpha}$ still satisfies \Cref{LS0}.

\begin{prop}\label{res:limAlphaNot0}
	Assume that \,$\lim_{n \to \infty} \alpha_n \eqqcolon \overline{\alpha} > 0$. Then \Cref{LS0} fails. 
\end{prop}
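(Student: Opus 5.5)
Since \Cref{LS0} asks the limit sup to vanish, it suffices to exhibit a vanishing sequence $(h_n)_n$ and a constant $c>0$ with $W_1(\mu_\alpha,\mu_{\alpha,h_n}) \geqslant c\, h_n$. We work with $p=1$, which is enough by \Cref{rem:limindepp}, and use the dual formula~\Cref{dual:p=1} with a distance function, as in the second half of the proof of \Cref{res:charLS1}. Note that $C(\alpha)$ need not belong to $\mathscr{A}$, since $\tau$ does not go to $0$ when $\overline{\alpha}<1$. So we do not invoke \Cref{res:charLS1} directly. Instead we only extract a fixed positive fraction of $h$.

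\textbf{Choice of scales and potential.} Set $\psi(x) \coloneqq d(x, C^{n+1})$ at scale $n$, which is $1$-Lipschitz. Each interval $I^n_\ell$ has length $\delta_n$. After the middle gap of length $\alpha_n\delta_n$ is removed, it becomes two intervals of length $\delta_{n+1} = \frac{1-\alpha_n}{2}\delta_n$. Consecutive intervals of $C^n$ are separated by gaps of length at least $\alpha_{n-1}\delta_{n-1} \geqslant \alpha_n \delta_n$, since $\alpha_{n-1}\geqslant \alpha_n$ and $\delta_{n-1}>\delta_n$. Hence every gap of $C^{n+1}$ has length at least $g_n \coloneqq \alpha_n\delta_n \geqslant \overline{\alpha}\,\delta_n$. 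Take $h_n \coloneqq \delta_{n+1} + g_n/2$.

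\textbf{Lower bound.} For $x \in C^{n+1}$, consider the two points $x\pm h_n$. At least one of them lands in a gap of $C^{n+1}$ at distance $\gtrsim h_n$ from $C^{n+1}$, at least for $x$ in a fixed proportion of each subinterval. The cleanest variant is to compare with the centres: if $x$ lies in the left subinterval $J$ of $I^n_\ell$, then $x+h_n$ lies in the middle gap or in the right subinterval. This case analysis is messy. A simpler route is to choose $\psi$ piecewise, equal to $d(\cdot, C^{n+1})$ only near gaps, and to bound instead the integral
\begin{align*}
	\int \frac{\frac12[\psi(x+h_n)+\psi(x-h_n)]-\psi(x)}{h_n}\,d\mu_\alpha ,
\end{align*}
using $\psi=0$ on $\supp\mu_\alpha\subset C^{n+1}$. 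Then the integrand is nonnegative everywhere, so it suffices to find a set of $\mu_\alpha$-mass bounded below on which it exceeds a constant $c(\overline{\alpha})>0$. By self-similarity and \Cref{eqmu}, $\mu_\alpha$ gives mass $1/4$ to each quarter-level interval $I^{n+2}_j$ inside $I^n_\ell$. For $x$ in the innermost level-$(n+2)$ pieces adjacent to the middle gap, choose instead $h_n \approx \delta_{n+1}/2 + g_n/2$ or similar. Then one of $x\pm h_n$ sits well inside the middle gap, at distance $\gtrsim \min(g_n, \delta_{n+1}) \gtrsim \min(\overline{\alpha}, (1-\alpha_0)/2)\,\delta_n$ from $C^{n+1}$. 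Meanwhile $h_n \lesssim \delta_n$. This gives $W_1(\mu_\alpha,\mu_{\alpha,h_n})/h_n \geqslant c$ with $c$ depending only on $\overline{\alpha}$ and $\alpha_0$, uniformly in $n$.

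\textbf{Main obstacle.} The delicate step is to fix $h_n$ so that, for a set of $x$ of mass uniformly bounded below (say $\geqslant 1/4$), one shifted point falls deep inside a gap. This must hold uniformly in $n$ despite the varying ratios $\alpha_n$. We handle this by using only the middle gap of each $I^n_\ell$ together with the explicit geometry of $C^{n+2}$ inside it. We expect to pick $h_n = \frac{1}{2}(\alpha_n\delta_n + \delta_{n+1})$, the distance from the centre of the middle gap to the centre of the adjacent level-$(n+1)$ interval. With this choice, any $x$ in the inner level-$(n+2)$ piece is sent by one of $x\pm h_n$ to within $\delta_{n+1}/2$ of the gap centre. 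That point is therefore at distance at least $\alpha_n\delta_n/2 - \delta_{n+1}/2$ from $C^{n+1}$ when this is positive. If it is not positive, we instead use the gap between level-$(n+2)$ pieces, whose size is $\alpha_{n+1}\delta_{n+1}\geqslant\overline{\alpha}\delta_{n+1}$. A two-case argument on whether $\alpha_n\delta_n \gtrless \delta_{n+1}$ should then close the bound.
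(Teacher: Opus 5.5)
Your framework is the same as the paper's. You test the dual formula \Cref{dual:p=1} with a distance function to a finite stage of the construction ($\psi = d(\cdot, C^{n+1})$), and use $\psi = 0$ on $\supp \mu_{\alpha}$ so that the integrand $\frac{\psi(x-h)+\psi(x+h)}{2h}$ is nonnegative. You then look for a set of uniformly positive mass on which one shifted point lands deep inside a gap. However, there is a genuine gap: the step that carries the proof, the quantitative lower bound on that set, is never established, and the version you commit to is false.

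With $h_n = \frac12(\alpha_n\delta_n + \delta_{n+1})$ and $x$ in the inner level-$(n+2)$ piece, your only estimate is $d(x+h_n, C^{n+1}) \geqslant \frac12(\alpha_n\delta_n - \delta_{n+1}) = \frac{3\alpha_n - 1}{4}\delta_n$. This is useful only when $\overline{\alpha} > 1/3$. It is exactly $0$ for the triadic Cantor set $\alpha \equiv 1/3$ and negative below that. The fallback (``use the gap between level-$(n+2)$ pieces'') comes with no choice of $h$ and no estimate.

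Your claim that one of $x \pm h_n$ is at distance $\gtrsim \min(\alpha_n\delta_n, \delta_{n+1})$ from $C^{n+1}$ on a set of relative mass $1/4$ is also false. Take $\alpha \equiv 0.1$ and normalize $I^n_\ell = [0,1]$. Then $C^{n+1} \cap I^n_\ell = [0,0.45] \cup [0.55,1]$, the inner piece is $[0.2475, 0.45]$, and $h_n = 0.275$. For every $x \in [0.275, 0.45]$, both $x - h_n \in [0, 0.175]$ and $x + h_n \in [0.55, 0.725]$ lie in $C^{n+1}$. So the integrand vanishes on most (in $\mu_{\alpha}$-mass, at least three quarters) of that piece, and on the rest it is at most $0.1$. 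The underlying reason is that when $\overline{\alpha}$ is small, a level-$(n+2)$ piece (length $\approx \delta_n/4$) is much longer than the gap $\alpha_n\delta_n$ it borders, so its translate cannot fit inside that gap.

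The missing idea is to work at a depth $M$ that depends on $\overline{\alpha}$, and to take as good set a short neighbourhood of a gap edge rather than a whole piece:
\begin{itemize}
\item Take $h_n \coloneqq \alpha_n \delta_n/2$, half the middle gap of $I^n_\ell$, and let $L$ be the left level-$(n+1)$ subinterval of $I^n_\ell$.
\item For $x \in [\max L - h_n/2, \max L]$, the point $x + h_n$ lies in the middle gap at distance at least $h_n/2$ from $C^{n+1}$. Hence the integrand is at least $1/4$ there.
\item Since $\delta_{n+1+M} \leqslant 2^{-M}\delta_{n+1} \leqslant 2^{-M-1}\delta_n$, this interval contains the rightmost level-$(n+1+M)$ piece of $L$ as soon as $2^{-M} \leqslant \overline{\alpha}/2$. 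By \Cref{eqmu}, that piece has mass $2^{-(n+1+M)}$.
\item Do the same symmetrically on the right subinterval and sum over the $2^n$ intervals of $C^n$. This gives $W_1(\mu_{\alpha},\mu_{\alpha,h_n})/h_n \geqslant 2^{-M-2}$, a constant depending only on $\overline{\alpha}$.
\end{itemize}
This is the paper's argument, written there with $\psi = d(\cdot, C^n)$ and the intervals of $C^{n-1}$.
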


\begin{proof}
	By \Cref{res:boundR}, the property \Cref{LS0} is independent of the first terms of the sequence $(\alpha_n)_n$, and we might consider that $\alpha_n \geqslant \overline{\alpha}/2$ for any $n$.
	Denote again $\delta_n$ the length of any interval $I^{n}_{\ell}$. Let $h_n \coloneqq \alpha_n \delta_n / 2$ and $\psi : x \mapsto \inf_{y \in C^n} d(x,y)$. As $C(\alpha) \subset C^n$, there holds $\psi(x) = 0$ for $\mu_{\alpha}-$a.e. $x \in \Sp$. On the other hand, since $\psi$ is 1-Lipschitz, it is admissible in the dual problem for the 1-Wasserstein distance, and there holds
	\begin{align}\label{Cantorcomputation:lb}
		\frac{W_1(\mu_{\alpha}, \mu_{\alpha,h_n})}{h_n} 
		&\geqslant \frac{1}{h_n} \left[\int_{y \in \Sp} \psi(y) d\mu_{\alpha,h_n} - \int_{x \in \Sp} \psi(x) d\mu_{\alpha}\right]
		= \int_{x \in \Sp} \frac{\psi(x-h_n) + \psi(x+h_n)}{2 h_n} d\mu_{\alpha} \notag \\
		&= \sum_{\ell = 1}^{2^{n-1}} \int_{x \in I^{n-1}_{\ell}} \frac{\psi(x-h_n) + \psi(x+h_n)}{2 h_n} d\mu_{\alpha}
		= 2^{n-1} \int_{x \in I^{n-1}_{1}} \frac{\psi(x-h_n) + \psi(x+h_n)}{2 h_n} d\mu_{\alpha}.
	\end{align}
	In the last equality, we used the fact that each $I^{n-1}_{\ell}$ is a translation of $I^{n-1}_1$, and the corresponding self-similarity of $\mu_{\alpha}$. 
	The intersection $C^n \cap I^{n-1}_{1}$ is the union of two intervals $I_L$ and $I_{R}$, respectively the left and right connected components of $I^{n-1}_{1} \setminus (\alpha_n * I^n_{1})$. 
	
	For any $x \in [\max I_L - h_n/2, \max I_L]$, the point $x + h_n$ is at distance at least $h_n/2$ from any point of $I_L$. On the other hand, $\min I_R - \max I_L = 2 h_n$, so that $x + h_n$ is at distance at least $h_n$ of $I_R$. Therefore $\psi(x + h_n) = \inf_{z \in C_n} |x - z| \geqslant h_n/2$. Repeating the symmetric reasoning on $I_R$ and using that $\psi \geqslant 0$, we get the lower bound 
	\begin{align*}
		\int_{x \in I^{n-1}_{1}} \frac{\psi(x-h_n) + \psi(x+h_n)}{2 h_n} d\mu_{\alpha}
		\geqslant \frac{\mu([\max I_L - h_n/2, \max I_L]) + \mu([\min I_R - h_n/2, \min I_R])}{4}.
	\end{align*}
	To conclude, we estimate $\mu_{\alpha}([\max I_L - h_n/2, \max I_L])$ from below. Let $M$ be large enough so that $2^{-M} \leqslant \overline{\alpha}/8$. We consider the rightmost interval $J_m$ of $C^m \cap I_L$ as $m$ grows: for $m = n+1$, the interval $J_m$ is contained in $[\max I_L - \delta_n/2, \max I_L]$. In general, $J_m \subset [\max I_L - 2^{-(m-n)} \delta_n, \max I_L]$. When $m = n + M$, we obtain
	\begin{align*}
		2^{-(m-n)} \delta_n 
		= 2^{-M} \delta_n 
		\leqslant \frac{\overline{\alpha}}{8} \delta_n 
		\leqslant \frac{\alpha_n}{4} \delta_n
		= \frac{h_n}{2},
	\end{align*}
	which implies that $J_{n+M} \subset [\max I_L - h_n/2, \max I_L]$. Hence $\mu_{\alpha}([\max I_L - h_n/2, \max I_L]) \geqslant \mu_{\alpha}(J_{n+M}) = 2^{-(n+M)}$ by \Cref{eqmu}. Repeating the argument on $I_R$, we conclude that the estimate of \Cref{Cantorcomputation:lb} can be pursued into a lower bound by $2^{-M-2}$, which concludes the proof. 
\end{proof}

\begin{prop}\label{res:limAlpha0}
	Assume that \,$\lim_{n \to \infty} \alpha_n = 0$. Then \Cref{LS0} holds. 
\end{prop}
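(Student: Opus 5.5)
We prove that $W_1(\mu_\alpha,\mu_{\alpha,h}) = o(h)$; by \Cref{rem:limindepp} this gives \Cref{LS0} for every $p$. We work with the formula used in the proof of \Cref{res:charLS0}:
\begin{align*}
	\frac{W_1(\mu_\alpha,\mu_{\alpha,h})}{h} = \int_{\Sp} \frac{1}{h}\left|\frac{F(x-h)+F(x+h)}{2} - F(x)\right| dx,
	\qquad F \coloneqq F_{\mu_\alpha}.
\end{align*}
Since $\alpha_n \to 0$, the gaps shrink relative to the intervals. So at scale $h \approx \delta_n$, the function $F$ should look like the distribution function of the normalized Lebesgue measure on $C^n$, whose second difference is small except near gaps. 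Each gap has relative width $\alpha_n$, which vanishes.

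\textbf{Step 1 (choice of scale).} Given $h$ small, pick $n$ with $\delta_{n+1} < h \leqslant \delta_n$. We have $\delta_{n+1} = \delta_n(1-\alpha_n)/2$, so $h \sim \delta_n$ up to a factor $\approx 2$.

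\textbf{Step 2 (comparison with a linear interpolation).} On $C^n$, compare $F$ with $G \coloneqq F_{\mu^n}$ (up to a refined level $m$, see below). We have $F = G$ at every endpoint of every $I^n_\ell$, since both put mass $2^{-n}$ on each interval. Inside an interval of length $\delta_n$, the difference $|F-G|$ is controlled by how far the finer levels deviate from uniform. This deviation comes only from the gaps removed at levels $k \geqslant n$. A gap of relative size $\alpha_k$ in an interval of mass $2^{-k}$ shifts $F$ from linear by at most $O(\alpha_k 2^{-k})$. So $\sup_{I^n_\ell}|F-G| \lesssim 2^{-n}\sup_{k\geqslant n}\alpha_k = 2^{-n}\alpha_n$, using that $\alpha$ is nonincreasing and summing the geometric factors $2^{-(k-n)}$. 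On the gaps $F$ is constant, and so is $G$.

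\textbf{Step 3 (estimate for $G$).} The slope of $G$ is $2^{-n}/\delta_n$ on $C^n$ and $0$ on the gaps of $[0,1]\setminus C^n$. The gaps at level $j<n$ have length $\alpha_j\delta_j$, which may be much larger than $h$. Therefore:
\begin{itemize}
	\item the centred second difference of $G$ vanishes except within distance $h$ of a slope change;
	\item near each of the $2 \cdot 2^n$ interval endpoints, it is at most $\lesssim 2^{-n}\min(1,\text{gap}/h)$ on a set of length $2h$.
\end{itemize}
For endpoints adjacent to level-$(n-1)$ gaps of width $\alpha_{n-1}\delta_{n-1} \lesssim \alpha_{n-1} h$, the two slopes nearly cancel: $\frac12[G(x-h)+G(x+h)]-G(x)$ is $O(2^{-n}\alpha_{n-1})$ there. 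For endpoints next to large gaps (level $j<n-1$), the contribution is $O(2^{-n})$ on length $O(h)$. There are only $2^{j}$ such gaps at level $j$, for a total of $\sum_{j<n-1} 2^j \cdot 2^{-n}\cdot h/h \lesssim 1/2$.

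\textbf{Main obstacle.} This last count is where the bookkeeping is delicate and may still be too crude. It must be redone carefully: the ``corner'' at the edge of a large gap is not a slope jump of size $2^{-n}/\delta_n$ but rather of the averaged slope of the neighbouring interval structure, which is itself nearly linear at scale $h$. I would therefore organize the computation hierarchically:
\begin{itemize}
	\item near a level-$j$ gap, the error is $\asymp \alpha_j$ times the local mass density;
	\item these contributions are summed over $j \leqslant n$ with weights $2^{j}\cdot 2^{-j}$-type masses.
\end{itemize}
This should give $\frac{1}{h}\int|\cdots| \lesssim \sum_{j} w_{j,n}\alpha_j + \alpha_n$, with weights $w_{j,n}$ decaying geometrically in $n-j$ (the second difference at scale $h$ of a corner at scale $\delta_j \gg h$ only sees a fraction $h/\delta_j \approx 2^{-(n-j)}$ of its mass). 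A Toeplitz/Cesàro argument then sends this to $0$ since $\alpha_j\to 0$.

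\textbf{Step 4 (conclusion).} Combining Steps 2 and 3 with the triangle inequality, $\frac{1}{h}\int|F-G|$ over the support region contributes $O(2^{n}\cdot 2^{-n}\alpha_n\,\delta_n/h) = O(\alpha_n)$. So $W_1/h \lesssim \alpha_n + \sum_j 2^{-(n-j)}\alpha_j \to 0$.
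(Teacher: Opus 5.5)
Steps 2 and 4 are correct: $\|F-G\|_\infty\lesssim 2^{-n}\alpha_n$, so the fine levels contribute $O(\alpha_n)$ when $h\sim\delta_n$. But Step 3 is not proved, and it is where all the difficulty lies. Your explicit corner count for $G=F_{\mu^n}$ gives $\sum_{j<n-1}2^{j}2^{-n}\approx 1/2$, i.e.\ only $O(1)$. The hierarchical bound $\sum_j 2^{-(n-j)}\alpha_j$ meant to replace it is asserted, not derived, and it rests on a wrong diagnosis. Next to any corner, $G$ has exactly the slope $s=2^{-n}/\delta_n$ of the adjacent level-$n$ interval. A gap wider than $h$ genuinely contributes about $sh/2\approx 2^{-n}$ per edge, with no factor $\alpha_j$. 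The real error is that you treat every level $j<n-1$ as producing wide gaps. A level-$j$ gap has width $\alpha_j\delta_j\leqslant 4^{n-j}\alpha_j\delta_n$ (for $\alpha\leqslant 1/2$). This is much smaller than $h>\delta_{n+1}\geqslant \delta_n/4$ as long as $n-j$ stays bounded, and then the two corners of the gap cancel up to $O(s\,\alpha_j\delta_j)$, exactly as you observed for $j=n-1$. So the weights $2^{-(n-j)}\alpha_j$ do not describe the corner geometry. Wide gaps contribute about $2^{-(n-j)}$ per level regardless of $\alpha_j$, and their total is small only because they occur at levels with $n-j$ large.

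Within your framework the gap can be closed as follows. On $[0,1]$, write $G(x)=s\bigl(x-\sum_{\Gamma}\phi_\Gamma(x)\bigr)$ with $\phi_\Gamma(x)\coloneqq\mathcal{L}([0,x]\cap\Gamma)$, the sum running over the $2^j$ gaps $\Gamma$ of each level $j<n$. Each $\phi_\Gamma$ is a nondecreasing $1$-Lipschitz ramp of height $|\Gamma|$. Its centred second difference is therefore bounded by $\frac12\min\{h,|\Gamma|\}$ and vanishes outside a set of measure at most $4h$. Hence level $j$ contributes $\lesssim 2^{j}s\min\{\alpha_j\delta_j,\delta_n\}\lesssim \min\{2^{n-j}\alpha_j,\,2^{-(n-j)}\}$. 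Splitting at $n-j=L$ gives a coarse part $\lesssim 2^{-n}+2^{L}\alpha_{n-L}+2^{-L}$, which vanishes by letting $n\to\infty$ and then $L\to\infty$.

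The paper avoids cancellations altogether by decoupling $h$ from $\delta_n$. It picks $n$ with $\sqrt{\alpha_n}\,\delta_{n+1}\leqslant h<\sqrt{\alpha_{n-1}}\,\delta_n$ and writes $F=\sum_k f_k$ with $f_k=F_{\mu^{k+1}}-F_{\mu^k}$. Each coarse $f_k$, $k<n$, is $2^{-k}/\delta_k$-Lipschitz with $4\cdot 2^k$ kinks, so it contributes at most $8h/\delta_k$; these sum to $O(h/\delta_n)=O(\sqrt{\alpha_{n-1}})$. The fine levels contribute $O(\alpha_n\delta_n/h)=O(\sqrt{\alpha_n})$, which is your Step 2/4 estimate at a slightly smaller scale. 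With your choice $h\sim\delta_n$, that crude coarse bound is $O(h/\delta_n)=O(1)$, which is why you were forced into delicate cancellation bookkeeping.
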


\begin{proof}
	Again by \Cref{res:boundR}, we can discard the first terms of the sequence and assume that $\alpha_n \leqslant 1/2$ for any $n \in \mathbb{N}$. By \cite[Proposition~2.17]{santambrogioOptimalTransportApplied2015}, the 1-dimensional 1-Wasserstein distance can be computed by 
	\begin{align*}
		W_1(\mu_{\alpha},\mu_{\alpha,h})
		= \int_{x \in \Sp} \left|F_{\mu_{\alpha,h}}(x) - F_{\mu_{\alpha}}(x)\right| dx,
	\end{align*}
	with $F_{\mu_{\alpha}}$ and $F_{\mu_{\alpha,h}}$ the respective distribution functions of $\mu_{\alpha}$ and $\mu_{\alpha,h}$. In our case, there holds in addition that $F_{\mu_{\alpha,h}}(x) = \frac{1}{2} \left[F_{\mu_{\alpha}}(x-h) + F_{\mu_{\alpha}}(x+h)\right]$. We can further represent $F_{\mu_{\alpha}}$ as a series by mimicking the construction of $\mu_{\alpha}$ as a limit of $(\mu^n)_{n \in \mathbb{N}}$: let $f_{-1}(x) \coloneqq F_{\mu^0}(x) = \max(0,\min(1,x))$, and for each $n$, let $f_n \coloneqq F_{\mu^{n+1}} - F_{\mu^n}$. Then $f_n = 0$ out of $C^n$, and $f_n$ repeats the same pattern on each interval of $C^n$. Precisely, if $[a,a+\delta_n]$ and $[b,b+\delta_n]$ are two intervals of $C^n$, then $f_n(a+z) = f_n(b+z) = f_n(z)$ for any $z \in [0,\delta_n]$, with
	\begin{align}\label{CantorTo0:explicitfn}
		f_n(z) = 
		\begin{cases}
			2^{-(n+1)} \frac{z}{\delta_{n+1}} - 2^{-n} \frac{z}{\delta_n} & z \in [0,\delta_{n+1}], \\
			2^{-(n+1)} - 2^{-n} \frac{z}{\delta_n} & z \in (\delta_{n+1},\delta_n - \delta_{n+1}), \\
			2^{-(n+1)} + 2^{-(n+1)} \frac{z - (\delta_n - \delta_{n+1})}{\delta_{n+1}} - 2^{-n} \frac{z}{\delta_n} & z \in [\delta_n - \delta_{n+1}, \delta_{n}].
		\end{cases}		 
	\end{align}
	Recall that $\delta_{n+1} = \delta_n (1-\alpha_n)/2$. One checks that $\|f_n\|_{\infty} = f_n(\delta_{n+1}) = 2^{-(n+1)} \alpha_n$.
	Hence $\sum_{n = -1}^{\infty} f_n$ converges uniformly towards $F_{\mu_{\alpha}}$. We now bound each term of the series.
	
	For each $h$, let $n \in \mathbb{N}$ be such that $h \in [\sqrt{\alpha_n} \delta_{n+1}, \sqrt{\alpha_{n-1}} \delta_{n})$. We have to estimate 
	\begin{align*}
		\int_{x \in \Sp} \frac{\left|\frac{1}{2} \left[F_{\mu_{\alpha}}(x-h) + F_{\mu_{\alpha}}(x+h)\right] - F_{\mu_{\alpha}}(x)\right|}{h} dx
		\leqslant \sum_{k = -1}^{\infty} \int_{x \in \Sp} \frac{\left|\frac{1}{2} \left[f_k(x-h) + f_k(x+h)\right] - f_k(x)\right|}{h} dx.
	\end{align*}
	 We treat differently the indices $k < n$ and $k \geqslant n$. For any $k \geqslant n$, the application $x \mapsto \frac{1}{2} \left[f_k(x-h) + f_k(x+h)\right] - f_k(x)$ vanishes for any point $x$ at distance at least $h$ from $C^{n}$. Hence
	\begin{gather*}
		\sum_{k = n}^{\infty} \int_{x \in \Sp} \frac{\left|\frac{1}{2} \left[f_k(x-h) + f_k(x+h)\right] - f_k(x)\right|}{h} dx
		\leqslant 2 \sum_{k = n}^{\infty} \frac{\|f_k\|_{\infty}}{h} \mathcal{L}\left(\left\{ x \in \mathbb{R} \st d(x,C^{n}) \leqslant h \right\}\right) \\
		\leqslant 2 \sum_{k = n}^{\infty} \frac{2^{-(k+1)} \alpha_k}{h} 2^{n} \left(\mathcal{L}(I^{n}_1) + 2 h\right) 
		= \sum_{k = n}^{\infty} \alpha_k 2^{n-k} \left(\frac{\delta_{n+1}}{h} + 2\right)
		\leqslant \sum_{k = n}^{\infty} \alpha_k 2^{n-k} \left(\frac{1}{\sqrt{\alpha_n}} + 2\right)
		\leqslant \sqrt{\alpha_n} + 2 \alpha_n.
	\end{gather*}
	Here we used that $\alpha_k \leqslant \alpha_n$ for $k \geqslant n$.
	Now, for $k < n$, the application $x \mapsto \frac{1}{2} \left[f_k(x-h) + f_k(x+h)\right] - f_k(x)$ differs from 0 only around the points of discontinuity of $f_k'$. For $\alpha_n \leqslant 1/2$, the Lipschitz constant of $f_k$ can be seen in \Cref{CantorTo0:explicitfn} to be the slope of the middle segment, so that $\left|\frac{1}{2} \left[f_k(x-h) + f_k(x+h)\right] - f_k(x)\right| \leqslant 2^{-k} h/\delta_k$. Now, counting $2^k$ intervals of $C^k$, each containing 4 points of discontinuity, each influencing a segment of length $2h$, we can estimate that
	\begin{align*}
		\int_{x \in \mathbb{R}} \frac{\left|\frac{1}{2} \left[f_k(x-h) + f_k(x+h)\right] - f_k(x)\right|}{h} dx
		\leqslant \frac{1}{h} \times 2^{k} \times 4 \times 2 h \times \frac{2^{-k} h}{\delta_{k}}
		\leqslant \frac{8 h}{\delta_{k}}
		\leqslant \frac{8 h}{2^{n-k} \delta_{n}}
		\leqslant 2^{k-n+3} \sqrt{\alpha_n}.
	\end{align*}
	In the last equalities, we used that $\delta_{n+1} \leqslant \delta_n/2$, hence $\delta_k \geqslant 2^{n-k} \delta_n$, and the choice $h \leqslant \sqrt{\alpha_n} \delta_n$. In consequence, 
	\begin{align*}
		\sum_{k=0}^n \int_{x \in \mathbb{R}} \frac{\left|\frac{1}{2} \left[f_k(x-h) + f_k(x+h)\right] - f_k(x)\right|}{h} dx
		\leqslant \sum_{k=0}^n 2^{k-n+3} \sqrt{\alpha_n}
		\leqslant 16 \sqrt{\alpha_n}.
	\end{align*}
	There only stays to bound the contribution of $k = -1$, which is easily seen to be of order $4 h$. Gathering the above estimates, we conclude that $W_1(\mu_{\alpha},\mu_{\alpha,h})/h \to 0$ when $h \searrow 0$, hence that \Cref{LS0} holds. 
\end{proof}

\paragraph{Acknowledgments} This work benefited from the support of the ERC Starting Grant ConFine n°101078057 in Pisa.

\end{document}